\numberwithin{equation}{section}
\newtheorem{thm}[equation]{Theorem}
\newtheorem{cor}[equation]{Corollary}
\newtheorem{lm}[equation]{Lemma}
\newtheorem{clm}[equation]{Claim}
\theoremstyle{definition}
\theoremstyle{remark}
\DeclareMathOperator{\BZ}{\mathbb{Z}} 
\DeclareMathOperator{\BE}{\mathbb{E}} 
\DeclareMathOperator{\BL}{\mathbb{L}} 
\newcommand{\vep}{\varepsilon}
\DeclareMathOperator{\Norm}{N}        
\newcommand{\set}[2]{\{#1\mid\nobreak\text{#2}\}} 
\newcommand{\konj}{\overline}
\newcommand{\hphm}{\hphantom{-}}
\begin{document}
\date{Int4vec8, 22 April, 2011}
\title{Cubes of integral vectors in dimension four}
\author{Emil W. Kiss, P\'eter Kutas}
\address[Emil W. Kiss]{E{\"o}tv{\"o}s University\\
  Department of Algebra and Number Theory\\
  1117 Budapest, P{\'a}zm{\'a}ny P{\'e}ter
  s{\'e}t{\'a}ny 1/c\\
  Hungary}
\email[Emil W. Kiss]{ewkiss@math.elte.hu}
\email[P\'eter Kutas]{kutasp@gmail.com}

\thanks{Supported by Hungarian Nat.\ Sci.\
  Found.\ (OTKA) Grant No.\ NK72523.}

\subjclass{11R52, 52C07}

\keywords{Integral cube, Hurwitz integral quaternion}

\begin{abstract}
  A system of $m$ nonzero vectors in $\BZ^n$ is called an
  $m$-icube if they are pairwise orthogonal and have the
  same length. The paper describes $m$-icubes in $\BZ^4$ for
  $2\le m\le 4$ using Hurwitz integral quaternions, counts
  the number of them with given edge length, and proves that
  unlimited extension is possible in~$\BZ^4$.
\end{abstract}

\maketitle

\parskip=0pt plus 2.5pt

\section{Introduction and main results}
\label{sec_intr}

Two vectors are called \emph{twins} if they are orthogonal,
and have the same length. An \emph{$m$-icube} in $\BZ^n$ is
a sequence $(v_1,\ldots,v_m)$ of nonzero vectors in~$\BZ^n$
that are twins pairwise. The common \emph{length} of the
vectors~$v_\ell$ is the \emph{edge length} of the
icube. By the \emph{norm} of $v_\ell$ we mean the square of
its length. The main object of this paper is to study how
icubes can be {\it constructed, extended} and {\it
  counted}. The paper \cite{GKMS08} investigates these
questions extensively in~$\BZ^3$, using the number theory of
quaternions.

For a trivial example, if the dimension is even, then every
vector $(a_1,\ldots,a_{n})$ has a twin, namely
$(a_2,-a_1,a_4,-a_3,\ldots,a_n,-a_{n-1})$. Similarly, the
rows of the matrix
\[
\begin{pmatrix}
 a&\hphm b&\hphm c&\hphm d&\hphm e&\hphm f&\hphm g&\hphm h \\
 b&-a&\hphm d&-c&\hphm f&-e&-h&\hphm g \\
 c&-d&-a&\hphm b&\hphm g&\hphm h&-e&-f \\
 d&\hphm c&-b&-a&\hphm h&-g&\hphm f&-e \\
 e&-f&-g&-h&-a&\hphm b&\hphm c&\hphm d \\
 f&\hphm e&-h&\hphm g&-b&-a&-d&\hphm c \\
 g&\hphm h&\hphm e&-f&-c&\hphm d&-a&-b \\
 h&-g&\hphm f&\hphm e&-d&-c&\hphm b&-a 
\end{pmatrix}
\]
form an $8$-icube, proving that every $8$-dimensional
integral vector can be extended to an $8$-icube. The above
matrix comes from the multiplication table of
Cayley-numbers. The $4\times 4$ minor in the upper left
corner yields a $4$-icube in dimension~$4$, extending an
arbitrary element of $\BZ^4$.

Classical results of Hurwitz \cite{H23} and Radon \cite{R22}
show, however, that a similar ``permutational'' extension is
possible only in dimensions $1$, $2$, $4$ and $8$ (an
interesting approach using extraspecial $2$-groups is given
by Eckmann in \cite{E43}). To prove further extension
theorems we have to explore the number-theoretic structure
of the components of the vectors. An example for this type
of argument is the Euler-matrix
\[
\begin{pmatrix}
m^2+n^2-p^2-q^2&-2mq+2np&2mp+2nq\\
2mq+2np&m^2-n^2+p^2-q^2&-2mn+2pq\\
-2mp+2nq&2mn+2pq&m^2-n^2-p^2+q^2
\end{pmatrix}\,,
\]
which is a ``typical'' $3$-icube in dimension $3$ (see
\cite{Sar61} and \cite{GKMS08}). We start with an extension
theorem that generalizes Corollary~5.11 of \cite{GKMS08}.

\begin{thm}\label{main_extension_gen}
Let $(v_1,\ldots,v_{n-1})$ be an $n-1$-icube in $\BZ^n$,
where $n\ge 2$. If $n$ is even, then this icube can be
extended to an $n$-icube. If $n$ is odd, then such an
extension is possible if and only if the common length of
the vectors $v_\ell$ is an integer.
\end{thm}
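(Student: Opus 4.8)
The plan is to turn the extension problem into a single divisibility statement and then settle it with a trace identity. First I would record the data as matrices: collect $v_1,\dots,v_{n-1}$ as the rows of an $(n-1)\times n$ integer matrix $M$ and let $N$ be the common norm, so that orthogonality and equal length are precisely the relation $MM^{T}=NI_{n-1}$. Nonzero pairwise orthogonal vectors are linearly independent, so $M$ has rank $n-1$; therefore the ``cross product'' vector $u\in\BZ^{n}$ whose $j$-th coordinate is $(-1)^{n+j}$ times the minor of $M$ with the $j$-th column deleted is nonzero, and expanding a determinant with a repeated row shows $Mu=0$. Thus $u$ spans the one-dimensional orthogonal complement of $v_1,\dots,v_{n-1}$ in $\BR^{n}$, so every vector extending the icube is a (necessarily nonzero) integer multiple of the primitive vector $u':=u/d$, where $d$ is the gcd of the coordinates of $u$ (equivalently, of the maximal minors of $M$). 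Hence the icube extends if and only if $N/\norm{u'}^{2}$ is the square of an integer.

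Next I would compute $\norm{u'}^{2}$ in two ways. For the first, border $M$ by the extra row $u$ to get an $n\times n$ matrix $A$; since $u\perp v_\ell$ we have $AA^{T}=\operatorname{diag}(N,\dots,N,\norm{u}^{2})$, while cofactor expansion of $\det A$ along the last row gives $\det A=\norm{u}^{2}$ by the very definition of $u$, so $(\det A)^{2}=\det(AA^{T})$ yields $\norm{u}^{2}=N^{\,n-1}$ and hence $\norm{u'}^{2}=N^{\,n-1}/d^{2}$. This part is just bookkeeping. The substantial step is the second computation: from $(MM^{T})M=NM$ one gets $M(M^{T}M-NI_{n})=0$, so every column of the integer matrix $M^{T}M-NI_{n}$ is an integer vector killed by $M$, hence an integer multiple of $u'$; symmetry of $M^{T}M-NI_{n}$ then forces $M^{T}M-NI_{n}=\lambda\,u'u'^{T}$ for a scalar $\lambda$, and since the diagonal entries $u'^{2}_{1},\dots,u'^{2}_{n}$ have gcd $1$ (because the $u'_{j}$ do), $\lambda$ must be an integer. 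Taking traces and using $\tr(M^{T}M)=\tr(MM^{T})=(n-1)N$ gives $\lambda\norm{u'}^{2}=-N$, so $\norm{u'}^{2}\mid N$; combining with the first computation, $N^{\,n-2}\mid d^{2}$.

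It then remains to decide when $N/\norm{u'}^{2}=d^{2}/N^{\,n-2}$ is a perfect square. If $n$ is even, $N^{\,n-2}=\bigl(N^{(n-2)/2}\bigr)^{2}$, so $N^{\,n-2}\mid d^{2}$ gives $N^{(n-2)/2}\mid d$ (in a unique factorization domain $a^{2}\mid b^{2}$ implies $a\mid b$); hence $d^{2}/N^{\,n-2}$ is a square and $w:=\pm\,u/N^{(n-2)/2}$ is a nonzero integer vector of norm $N$ orthogonal to all $v_\ell$, so it extends the icube. If $n$ is odd and the common length $\sqrt N$ is an integer, say $N=s^{2}$, the same argument with $s^{\,n-2}$ in place of $N^{(n-2)/2}$ produces the extending vector $w=\pm\,u/s^{\,n-2}$. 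Conversely, if $n$ is odd and an extension $w=ku'$ exists, then $k^{2}N^{\,n-2}=d^{2}$ exhibits $N^{\,n-2}$ as a square in $\BQ$, hence as a square in $\BZ$; because $n-2$ is odd, comparing the exponents in the prime factorization of $N$ forces all of them to be even, i.e.\ $\sqrt N\in\BZ$. I expect the delicate point to be the identity $M^{T}M=NI_{n}+\lambda\,u'u'^{T}$ with $\lambda\in\BZ$ — in particular, arguing that $\lambda$ is genuinely an integer and then reading its value off the trace; once that is in place, the remainder is the classical cross-product identity together with elementary arithmetic of prime factorizations.
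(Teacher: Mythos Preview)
Your proof is correct and follows essentially the same skeleton as the paper's: form the generalized cross product $u$ from the maximal minors, show $\norm{u}^2=N^{\,n-1}$, and then establish the divisibility needed to make $u/N^{(n-2)/2}$ integral. The paper obtains the divisibility more directly: it adjoins the column $u/N^{(n-2)/2}$ to $L$ to get a square matrix $K$ with $K^{T}K=NI_{n}$, whence $KK^{T}=NI_{n}$, and the diagonal of $KK^{T}$ immediately gives $N-M_{i}^{2}/N^{\,n-2}\in\BZ$ for each $i$; your rank-one identity $M^{T}M-NI_{n}=\lambda\,u'u'^{T}$ with integral $\lambda$ and the trace computation reach the same conclusion by a slightly longer (but perfectly sound) route. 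One genuine improvement in your write-up is the converse in the odd case: the paper simply cites Proposition~1.3 of \cite{GKMS08} for the fact that an $n$-icube in $\BZ^{n}$ with $n$ odd must have integral edge length, whereas you prove it directly from $k^{2}N^{\,n-2}=d^{2}$, keeping the argument self-contained.
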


Note that this extending vector, if exists, is obviously
unique up to sign.

\begin{proof} Let $N$ denote the edge norm
  of~$(v_1,\ldots,v_{n-1})$. By Proposition~1.3 of
  \cite{GKMS08}, if $n$ is odd, then the edge length of any
  $n$-icube in $\BZ^n$ is an integer. Therefore an extension
  is only possible if $n$ is even or if $N$ is a
  square.

  Define $L$ to be the $n\times (n-1)$ matrix whose columns
  are $v_1,\ldots,v_{n-1}$. Then $L^TL=NI_{n-1}$ (where
  $I_{n-1}$ denotes the identity matrix). The Cauchy-Binet
  formula therefore implies that
\[
\det(L_1)^2+\dots +\det(L_n)^2=N^{n-1}\,,
\]
where $L_i$ is the minor of~$L$ obtained by deleting the
$i$-th row.

Let $M_i=(-1)^{n+i}\det(L_i)$. Add a last column to $L$
whose entries are $M_i/N^{(n-2)/2}$, and denote the
resulting matrix by~$K$. Then the columns of $L$ are
pairwise orthogonal by the Laplace expansion theorem for
determinants. The displayed formula above shows that
$K^TK=NI_n$. This implies that $KK^T=NI_n$. Denote the rows
of~$L$ by~$s_i$. We get that the scalar product of $s_i$ by
itself, which is an integer, equals
$N-M_i^2/N^{n-2}$. Therefore if $n$ is even or if $N$ is a
square, then $N^{(n-2)/2}$ divides $M_i$, and the last
column of $K$ consists of integers.
\end{proof}

Here are the main results of this paper.

\begin{thm}\label{main_extension}
Every $m$-icube in $\BZ^4$ can be extended to a $4$-icube
for $1\le m\le 3$.
\end{thm}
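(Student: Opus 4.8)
The plan is to treat the three cases $m=3$, $m=1$, $m=2$ in turn, with all of the work concentrated in the last. The case $m=3$ is immediate from Theorem~\ref{main_extension_gen}: a $3$-icube in $\BZ^4$ is an $n-1$-icube with $n=4$ even, hence extends to a $4$-icube. For $m=1$ the extension is already written down in the Introduction: the rows of the $4\times4$ upper left minor of the Cayley matrix form a $4$-icube whose first vector is an arbitrary $(a,b,c,d)\in\BZ^4$; equivalently, after identifying $\BR^4$ with the quaternions $\BH$, the quadruple $(v_1,iv_1,jv_1,kv_1)$ is a $4$-icube, because left multiplication by $i$, $j$, $k$ are isometries carrying the orthonormal frame $1,i,j,k$ to pairwise twins. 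So the substantive case is $m=2$, and by the case $m=3$ it is enough to extend a $2$-icube to a $3$-icube.

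Let $(v_1,v_2)$ be a $2$-icube of edge norm $N$ in $\BZ^4$, viewed as Lipschitz integral quaternions of norm $N$. Since $v_1\perp v_2$ and $N(v_1)=N(v_2)=N$, the quaternion $r:=v_1^{-1}v_2=\bar v_1 v_2/N$ is purely imaginary with $N(r)=1$, hence $r^2=-1$. A third vector $v_3$ forming a $3$-icube with $v_1,v_2$ is orthogonal to $v_1$, hence equals $v_1 s$ with $s=\bar v_1 v_3/N$ purely imaginary; orthogonality to $v_2$ becomes $\langle s,r\rangle=0$, and $N(v_3)=N$ becomes $N(s)=1$. Thus one must produce a purely imaginary unit quaternion $s$ with $s\perp r$ and $v_1 s\in\BZ^4$, and unwinding the last condition this is exactly the assertion that the positive definite rank-$2$ lattice
\[
  \Lambda:=\{\,x\in\BZ^4:\ x\perp v_1,\ x\perp v_2\,\}
\]
represents the integer $N$. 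This representation statement is the crux: it is not automatic, because $\Lambda$ is only a binary form whose determinant is a perfect square $e^2$ with $e\mid N$, and such forms need not represent $N$; moreover every purely linear-algebraic reformulation of ``find $v_3$'' is visibly circular, so genuine arithmetic must enter at this point.

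The route I would take is to exhibit a complex structure on $\Lambda$. Put $w:=\bar v_1 v_2$, a purely imaginary quaternion with $N(w)=N^2$ and $w^2=-N^2$. A short computation with conjugates --- using only that $w$ is pure and $v_1\perp v_2$ --- shows that for every $x\in\Lambda$ the product $xw$ is again orthogonal to $v_1$, to $v_2$ and to $x$, with $N(xw)=N^2 N(x)$; since $xw\in\BZ^4$ as well, right multiplication by $w$ maps $\Lambda$ into itself and acts on $\Lambda\otimes\BR$ as $N$ times a rotation by a right angle. Hence $\Lambda$ is a module over the order $\BZ[w|_\Lambda]\cong\BZ[N\sqrt{-1}]$ of $\BQ(\sqrt{-1})$, and enlarging this to the largest order over which $\Lambda$ is a proper fractional ideal, one identifies $\Lambda$ --- using that the Gaussian integers $\BZ[\sqrt{-1}]$ form a Euclidean, hence principal ideal, ring --- up to a rotation and a scaling with $\BZ[\sqrt{-1}]$ itself; concretely $\Lambda$ is then a lattice whose vectors have norm $e$ times a sum of two squares, so it represents $N$ as soon as $N/e$ is a sum of two squares. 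Finally $N/e$ is a sum of two squares for a structural reason: the orthogonal complement of $\Lambda$ in $\BZ^4$ has the same determinant $e^2$ and contains $\BZ v_1\oplus\BZ v_2$ --- whose Gram matrix is $N I_2$ --- with index $N/e$, and unimodularity of $\BZ^4$ makes these data compatible only if $N/e$ is a Gaussian norm. The steps I expect to be delicate are pinning down exactly which order $\Lambda$ is a fractional ideal over and isolating the degenerate configurations in which $\Lambda$ contains a standard basis vector, so that $(v_1,v_2)$ lies in fewer coordinates and $N$ is manifestly a sum of two squares; it may be cleaner still to first establish a normal form for $2$-icubes in $\BZ^4$ in terms of Hurwitz integral quaternions, from which the representability of $N$ by $\Lambda$ drops out at once. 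Either way, it is the arithmetic of the Hurwitz order that supplies the ingredient the elementary reformulations above cannot.
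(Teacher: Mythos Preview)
Your treatment of $m=1$ and $m=3$ is fine and matches the paper. For $m=2$ the paper does exactly what you flag at the very end as ``cleaner'': after reducing to odd edge norm (Claims~\ref{odd_reduction} and~\ref{odd_reduction_2}) it proves directly, via the left/right Euclidean structure of the Hurwitz order, that any $m$-icube of odd norm in $\BL$ can be written as $(\gamma\vep_1\delta,\ldots,\gamma\vep_m\delta)$ with $\gamma,\delta$ primary and $\vep_\ell\in Q$ (Lemmas~\ref{twin_param}, \ref{icube_param} and Theorem~\ref{twin_decomp_odd}); extension is then immediate by adjoining $\gamma\vep_{m+1}\delta$.

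Your primary route for $m=2$, however, does not go through as stated. The set-up is correct: $\Lambda=(v_1,v_2)^\perp\cap\BZ^4$ has square determinant $e^2$ with $e\mid N$, and right multiplication by $w=\bar v_1v_2$ does send $\Lambda$ into itself with $w^2=-N^2$. But the conclusion you draw from this---that $\Lambda$ is, up to scaling, the Gaussian lattice $\BZ[i]$, hence represents exactly the numbers $e\cdot(\text{sum of two squares})$, and that $N/e$ is a Gaussian norm---is false. Take $v_1=(3,0,0,0)$ and $v_2=(0,1,2,2)$, so $N=9$. Then $\Lambda$ has Gram matrix $\begin{pmatrix}5&4\\4&5\end{pmatrix}$ in the basis $(0,-2,1,0),(0,-2,0,1)$, so $e=3$ and $N/e=3$, which is not a sum of two squares; moreover the binary form $5a^2+8ab+5b^2$ has discriminant $-36$, is equivalent to $2a^2+2ab+5b^2$, and is \emph{not} $3(a^2+b^2)$. (It does represent $9$, via $(a,b)=(2,-1)$, giving $v_3=(0,-2,2,-1)$---so the theorem holds, but not for the reason you give.) The gap is that the action of $w$ only makes $\Lambda$ a module over the non-maximal order $\BZ[N\sqrt{-1}]$; passing to the order for which $\Lambda$ is a \emph{proper} ideal need not land you in $\BZ[i]$, and proper ideal classes of intermediate orders $\BZ[f\sqrt{-1}]$ are not all principal. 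Something further---essentially the factorisation in $\BE$ that the paper uses---is needed to see why $\Lambda$ always represents $N$.
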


Of course, the only nontrivial case occurs when $m=2$,
according to the statements above. The proof is found at the
end of Section~\ref{sec_twin}.

The following result, proved in Section~\ref{sec_count},
counts the number of $m$-icubes in~$\BZ^4$. Denote by
$f_m(N)$ the number of $m$-icubes with edge norm~$N$ (that
is, edge length~$\sqrt{N}$) in $\BZ^4$. A famous theorem by
Jacobi provides the value of $f_1(N)$, we include it for
comparison. Let $c_m=24\cdot2^m/(4-m)!$, thus $c_1=8$,
$c_2=48$, $c_3=192$ and $c_4=384$. Furthermore, if $p$ is a
(positive) odd prime and $k\ge 1$, then define
\[
g(p^k)=\frac{(k+1)p^k(p^2-1)-2(p^{k+1}-1)}{(p-1)^2}\,.
\]

\begin{thm}\label{main_count}
  Let $g_m(N)=f_m(N)/c_m$. Then $g_m$ is a multiplicative
  function for every $1\le m\le 4$, whose value on prime
  powers is given by the following formulae, where $p$ is an
  odd prime and $k\ge 1$.
\begin{enumerate}
\item[$(1)$] $g_m(2^k)=3$ for every $k\ge 1$.
\item[$(2)$] $g_1(p^k)=\sigma(p^k)=(p^{k+1}-1)/(p-1)$ (this
  is Jacobi's classical result) and
  $g_3(p^k)=g_4(p^k)=g(p^k)$ (the function defined before
  the theorem).
\item[$(3)$] If $p\equiv 3~(4)$, then $g_2(p^k)=g(p^k)$. If
  $p\equiv 1~(4)$, then $g_2(p^k)=(k+1)p^k$.
\end{enumerate}
In particular, we have that $f_4(N)=2f_3(N)$.
\end{thm}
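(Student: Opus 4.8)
The plan is to translate everything into the arithmetic of quaternions, identifying $\BZ^4$ with the Lipschitz order $\Lambda=\BZ+\BZ i+\BZ j+\BZ k$ inside the Euclidean Hurwitz order $H=\Lambda+\BZ\omega$, $\omega=(1+i+j+k)/2$, in which every one-sided ideal is principal and the reduced norm is multiplicative. Two observations reduce the work. First, by Theorem~\ref{main_extension} every $3$-icube in $\BZ^4$ extends to a $4$-icube, and by the remark after Theorem~\ref{main_extension_gen} the extending vector is unique up to sign; hence deletion of the last vector is a two-to-one surjection from $4$-icubes of edge norm $N$ onto $3$-icubes of edge norm $N$, so $f_4(N)=2f_3(N)$ and $g_3=g_4$. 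Second, $f_1(N)$ is the number of nonzero lattice vectors of norm $N$, so $g_1(N)=f_1(N)/8=r_4(N)/8$ and the claims about $m=1$, including $g_1(2^k)=3$, are just Jacobi's four-square theorem. It remains to treat (i) a quaternionic model of $m$-icubes, (ii) multiplicativity of $g_m$, and (iii) the prime-power values for $m=2$ and $m=4$.

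For (i): if $(v_1,\dots,v_m)$ has edge norm $N$, put $w_\ell=v_1^{-1}v_\ell$; then $N(w_\ell)=1$ and, since $\langle v_\ell,v_{\ell'}\rangle=N\langle w_\ell,w_{\ell'}\rangle$, the tuple $(1,w_2,\dots,w_m)$ is a norm-$1$ $m$-icube in $\BQ^4$, so each $w_\ell$ with $\ell\ge2$ is a pure unit quaternion and the $w_\ell$ are pairwise orthogonal. Up to sign changes of the $v_\ell$ such a frame comes from the standard one by conjugating the imaginary part, $w_\ell=\bar\beta\epsilon_\ell\beta/N(\beta)$ with $\beta$ primitive and $\epsilon_1,\dots,\epsilon_m$ an ordered $m$-subset of $\{1,i,j,k\}$, so the icube is encoded by the pair $(v_1,\beta)$ together with integrality conditions $N(\beta)\mid v_1\bar\beta\epsilon_\ell\beta$ in $\Lambda$ (automatic for $\epsilon_\ell=1$). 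For $m=4$ this takes a clean form: a $4$-icube is, up to the symmetries counted by $c_4=2^4\cdot4!$, a pair $(q,r)$ of quaternions with $N(q)N(r)=N$ and $q\Lambda r\subseteq\Lambda$, taken modulo real scaling and the unit twists $(q,r)\mapsto(qu,u^{-1}r)$. I would turn these correspondences into honest bijections, carefully tracking the $c_m$ symmetries and the passage between $\Lambda$ and $H$ — the latter gap at the prime $2$ being exactly what makes $2$ exceptional.

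Granting this, (ii) follows from unique factorization in $H$: if $\gcd(N_1,N_2)=1$, a primitive quaternion of norm $N_1N_2$ splits essentially uniquely into factors of norms $N_1$ and $N_2$, so a Chinese-remainder argument on the data $(v_1,\beta)$ or $(q,r)$ modulo the prime-power parts of $N$ factorizes the count, and division by $c_m$ converts the product of gcd-correction constants into $1$, giving $g_m(1)=1$ and multiplicativity. For (iii): the $m=4$ count (hence $m=3$) organizes the pairs $(q,r)$ of norms $p^a,p^{k-a}$ with $q\Lambda r\subseteq\Lambda$ by their associated one-sided $H$-ideals, evaluates those by the standard ideal counts for the Hurwitz order, and sums over $a=0,\dots,k$; the sum telescopes to $g(p^k)$, while at $p=2$ the order ramifies, every quaternion of $2$-power norm is an associate of a power of $1+i$, the data become rigid, and the count collapses to $3$. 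For $m=2$ one argues directly: for each of the $r_4(N)$ vectors $v_1$ of norm $N$, the vectors $v_2\perp v_1$ of norm $N$ correspond bijectively to the pure quaternions $\eta=\bar v_1v_2$ of norm $N^2$ inside the index-$N^2$ sublattice $\bar v_1\Lambda$ of $\Lambda$; summing over $v_1$, dividing by $c_2$, and analyzing this count gives $g_2$, with the class of $p$ modulo $4$ entering because whether $p$ splits in $\BZ[i]$ governs which quaternions of norm $p^k$ reflect onto a pure quaternion of norm $p^{2k}$ — yielding $(k+1)p^k$ when $p\equiv1~(4)$ and $g(p^k)$ when $p\equiv3~(4)$, and $3$ when $p=2$.

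The main obstacle, I expect, is the $m=2$ prime-power count: unlike $m=4$ it does not reduce cleanly to counting one-sided ideals, and the split/inert dichotomy must be extracted from the fine structure of the pure quaternions of norm $p^{2k}$ lying in a shifted Lipschitz lattice — essentially a three-square count with a divisibility constraint, where Gauss-type class-number data is implicit. A secondary, more routine difficulty is keeping the non-maximal Lipschitz order under control; I would perform all factorization arguments in $H$ and intersect back with $\Lambda$ only at the end, which localizes the anomalous behavior at the ramified prime $2$.
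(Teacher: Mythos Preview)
Your broad strategy---the identity $f_4=2f_3$ from Theorem~\ref{main_extension_gen}, Jacobi for $m=1$, and modeling a $4$-icube as a quaternion sandwich $q\epsilon_\ell r$---is exactly the paper's. But the paper does not leave ``turning these correspondences into honest bijections'' as a to-do item: it introduces the notion of a \emph{primary} quaternion (a canonical associate among the $24$ unit multiples), reduces to odd edge norm by pulling out explicit factors $1+i$, $1+j$, $1+k$ (this is where the factor $3$ at the prime $2$ comes from, not from abstract ramification), and then proves that for $m\ge 3$ the primary pair $(\gamma,\delta)$ is \emph{unique} once $\gamma$ is primitive. That uniqueness converts the count directly into a Dirichlet convolution $\sum_{d\mid N}h(d)h(N/d)$, and multiplicativity of $g_4$ follows from multiplicativity of convolutions---no Chinese-remainder bookkeeping on lattices is needed.

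For $m=2$ your route genuinely diverges. You propose, for fixed $v_1$, to count pure $\eta=\bar v_1 v_2$ of norm $N^2$ inside the sublattice $\bar v_1\Lambda$, and you flag this as the main obstacle. The paper avoids this three-squares-with-constraint problem entirely: it keeps the sandwich form $(\gamma\vep_1\delta,\gamma\vep_2\delta)$ but restores uniqueness by imposing the extra normalization that $\gamma\,i\,\bar\gamma$ be primitive. The count of such $\gamma$ of given norm is supplied by a lemma on primitive three-square representations (essentially the $q(N)$ of Claim~\ref{twin_count2}), and the $p\equiv 1$ versus $p\equiv 3\pmod 4$ dichotomy enters through the sign $(-1/p)$ in $q$. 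Your pure-quaternion-in-a-sublattice approach may well be workable, but as written it is a plan rather than an argument, and the paper's normalized-sandwich method gives a cleaner path to the closed form.
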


The proofs are based on a representation theorem of icubes
using Hurwitz integral quaternions (see
Theorems~\ref{twin_decomp_thm}, \ref{twin_decomp_odd},
\ref{icube_unique} and~\ref{icube_unique2}).

\section{Integral quaternions}
\label{sec_quat}

We review some properties of integral quaternions. The
general references are \cite{CS03}, \cite{H19} and
\cite{HW79}, but we ask the reader to browse Section~2 of
\cite{GKMS08} for background, as we shall use the notation
and the results introduced there. The \emph{norm} of
$\alpha=a+bi+cj+dk$ is $\Norm(\alpha)=a^2+b^2+c^2+d^2$. This
$\alpha$ is a \emph{Hurwitz} integral quaternion if
$2a,2b,2c,2d$ are all integers of the same parity. Hurwitz
integral quaternions form a left Euclidean ring~$\BE$. The
ring of quaternions with integral coefficients is denoted
by~$\BL$ (these are the \emph{Lipschitz} integral
quaternions). The sign $\alpha\mid\beta$ means:
$\alpha$~divides~$\beta$ on the left in~$\BE$. The ring
$\BE$ has $24$ units. Every element $\alpha$ of $\BE$ has a
left associate in $\BL$ and a right associate in~$\BL$.

\begin{thm}[\cite{GKMS08}, Theorem 2.7; see Theorem 377 of
  \cite{HW79} and the note after the proof of Theorem~3 in
  Section~5.3 of~\cite{CS03}]\label{quat_irred}
  An integral quaternion is irreducible in the ring~$\BE$ if
  and only if its norm is a prime in $\BZ$. The only
  elements of\/ $\BE$ whose norm is $2$ are $1+i$ and its
  left associates. If $p>2$ is a prime in $\BZ$, then there
  exist exactly $24(p+1)$ integral quaternions whose norm is
  $p$.
\end{thm}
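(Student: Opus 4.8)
For the first claim the easy direction is immediate: if $\Norm(\alpha)=p$ is a rational prime and $\alpha=\beta\gamma$, then $\Norm(\beta)\Norm(\gamma)=p$, so one factor is a unit and $\alpha$ is irreducible. For the converse I would prove the contrapositive: if $n=\Norm(\alpha)$ is composite, then $\alpha$ factors into two non-units. One classical input is used, Lagrange's four-square theorem, which gives $p=\pi\konj\pi$ with $\Norm(\pi)=p$ for every rational prime~$p$, so every rational prime is reducible in~$\BE$. Now fix a prime $p\mid n$ and let $\BE d=\BE\alpha+\BE p$, so that $\alpha=\xi d$ and $p=\eta d$ for suitable $\xi,\eta\in\BE$; from the second equation $\Norm(d)\mid p^2$, hence $\Norm(d)\in\{1,p,p^2\}$. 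If $\Norm(d)=p$, then $d$ is an irreducible right divisor of $\alpha$ and the complementary factor has norm $n/p>1$, so $\alpha$ is reducible. If $\Norm(d)=p^2$, then $\eta$ is a unit, so $p$ divides $\alpha$ on the right; and if $\Norm(d)=1$, then $\xi\alpha+\eta p=1$ for suitable $\xi,\eta\in\BE$, and multiplying on the right by $\konj\alpha$ gives $\konj\alpha\in\BE p$, so again $p$ divides~$\alpha$. In these last two cases $p^2\mid n$, and writing $\alpha=\zeta p=\zeta\pi\konj\pi$ exhibits $\alpha$ as a product of two quaternions of norm at least~$p$. This settles the first claim. (All of this rests on the facts recalled before the theorem: $\BE$ is a left Euclidean, hence left principal ideal, domain, the norm is multiplicative, and $\BE$ has exactly $24$ units, namely the elements of norm~$1$.)

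For the second claim, let $\alpha=a+bi+cj+dk$ have $\Norm(\alpha)=2$, where $2a,2b,2c,2d$ are integers of a common parity. If that parity is odd, then each of $|a|,|b|,|c|,|d|$ is at least $1/2$, and the norm is then $1$ (all coordinates $\pm1/2$) or at least $9/4$ (some coordinate of absolute value $\ge3/2$), never~$2$. So $a,b,c,d\in\BZ$, and four integer squares sum to~$2$ only with two of the coordinates equal to $\pm1$ and the other two equal to~$0$; this gives exactly $\binom{4}{2}\cdot2^2=24$ quaternions of norm~$2$. On the other hand $1+i$ has $24$ left associates $u(1+i)$, $u$ ranging over the units (the map $u\mapsto u(1+i)$ being injective because $1+i$ is invertible in the quaternion algebra), all of norm~$2$; comparing the two counts shows they coincide.

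For the third claim, fix an odd prime~$p$ and call $\alpha,\alpha'$ of norm~$p$ equivalent when $\BE\alpha=\BE\alpha'$. Since $\alpha'\in\BE\alpha$ forces $\alpha'=\xi\alpha$ with $\Norm(\xi)=1$, each equivalence class is an orbit $\{u\alpha:u\text{ a unit}\}$ of size exactly~$24$, so it suffices to count the left ideals $\BE\alpha$ with $\Norm(\alpha)=p$. As $\BE$ is a left principal ideal domain and $[\BE:\BE\alpha]=\Norm(\alpha)^2$, these are precisely the left ideals of index~$p^2$. Since $p=\konj\alpha\,\alpha\in\BE\alpha$, every such ideal contains $\BE p$, so they correspond to the left ideals of order~$p^2$ in the $p^4$-element ring $\BE/\BE p$. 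Because $p$ is odd, $\BE/\BE p\cong\BL/\BL p$ is the quaternion algebra over $\mathbb{F}_p$ with norm form $x^2+y^2+z^2+w^2$; this nondegenerate quaternary form is isotropic over $\mathbb{F}_p$, hence the algebra is split, $\BE/\BE p\cong M_2(\mathbb{F}_p)$, and its left ideals of $\mathbb{F}_p$-dimension~$2$ are the minimal ones, parametrized by the lines of $\mathbb{F}_p^2$, i.e.\ by $\mathbb{P}^1(\mathbb{F}_p)$; there are $p+1$ of them. Hence there are $24(p+1)$ integral quaternions of norm~$p$. The main obstacle I anticipate is bookkeeping rather than depth: in the first claim one must carefully track the left/right asymmetry of divisibility in the noncommutative ring $\BE$ and dispose of all three cases for $\Norm(d)$, and in the third the crux is the clean identification $\BE/\BE p\cong M_2(\mathbb{F}_p)$ together with the enumeration of its minimal left ideals; everything else reduces to multiplicativity of the norm and counting units.
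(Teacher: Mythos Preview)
Your argument is correct in all three parts. Note, however, that the paper does not actually prove this theorem: it is quoted as Theorem~2.7 of \cite{GKMS08}, with further pointers to Hardy--Wright and Conway--Smith, and no proof is given in the present paper. So there is nothing to compare against here; you have supplied a self-contained proof where the paper only supplies references.

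A few minor remarks on your write-up. In the first claim, your case split on $\Norm(d)\in\{1,p,p^2\}$ is clean, but be careful with sidedness: from $\BE d=\BE\alpha+\BE p$ you get that $d$ is a \emph{right} divisor of $\alpha$ and of $p$, and in the cases $\Norm(d)\in\{1,p^2\}$ what you actually obtain is that the central element $p$ divides $\alpha$; since $p$ is central, left and right divisibility by $p$ coincide, so the slip is harmless. In the third claim you use $[\BE:\BE\alpha]=\Norm(\alpha)^2$ without comment; this follows because right multiplication by $\alpha$ on $\BE\cong\BZ^4$ has determinant $\Norm(\alpha)^2$, a standard fact about the regular representation of a quaternion algebra, and it is worth one line of justification. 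The identification $\BE/p\BE\cong\BL/p\BL\cong M_2(\mathbb{F}_p)$ for odd $p$ and the count of minimal left ideals via $\mathbb{P}^1(\mathbb{F}_p)$ are exactly the classical route taken in the references the paper cites.
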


\begin{lm}[\cite{GKMS08}, Lemma 2.5]\label{quat_lnko}
  Suppose that $\alpha\in\BE$ and $p\in\BZ$ is a prime such
  that $p\mid\Norm(\alpha)$ but $p$~does not divide
  $\alpha$. Then $\alpha$ can be written as $\pi\alpha'$
  where $\Norm(\pi)=p$, and this $\pi$ is uniquely
  determined up to right association.
\end{lm}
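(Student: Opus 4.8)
The plan is to realize $\pi$ as a one-sided greatest common divisor of $p$ and $\alpha$ inside $\BE$. Since $\BE$ is left Euclidean and the conjugation $\mu\mapsto\overline{\mu}$ is a norm-preserving anti-automorphism of $\BE$, the ring is also right Euclidean, hence every right ideal of $\BE$ is principal. I would therefore let $\delta$ be a generator of the right ideal $p\BE+\alpha\BE$. Then $p$ and $\alpha$ both lie in $\delta\BE$, so $\delta$ is a left divisor of each of them; conversely, from a representation $\delta=p\xi+\alpha\eta$ one sees that any common left divisor of $p$ and $\alpha$ left-divides $\delta$. Thus $\delta$ is a left gcd of $p$ and $\alpha$.

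Next I would pin down $\Norm(\delta)$. From $p=\delta\rho$ and multiplicativity of the quaternion norm, $\Norm(\delta)$ divides $\Norm(p)=p^{2}$, so $\Norm(\delta)\in\{1,p,p^{2}\}$. The value $1$ is impossible: if $\delta$ were a unit then $\delta\BE=\BE$, so $1\in p\BE+\alpha\BE$, say $1=p\xi+\alpha\eta$; left-multiplying by $\overline{\alpha}$ gives $\overline{\alpha}=p\,\overline{\alpha}\xi+\Norm(\alpha)\eta$, which lies in $p\BE$ because $p$ divides $\Norm(\alpha)$, whence $\overline{\alpha}\in p\BE$ and therefore $\alpha\in p\BE$, contradicting the hypothesis that $p$ does not divide $\alpha$. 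The value $p^{2}$ is impossible too: then the cofactor $\rho$ has norm $1$, so $\delta\in p\BE$, and since $\alpha\in\delta\BE$ this forces $\alpha\in p\BE$, again a contradiction. Hence $\Norm(\delta)=p$, and writing $\alpha=\delta\alpha'$ (possible since $\alpha\in\delta\BE$) gives the asserted factorization with $\pi=\delta$.

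For uniqueness, suppose $\pi_{1}\in\BE$ satisfies $\Norm(\pi_{1})=p$ and $\pi_{1}$ left-divides $\alpha$. From $\pi_{1}\overline{\pi_{1}}=p$ we get that $\pi_{1}$ left-divides $p$, so $\pi_{1}$ is a common left divisor of $p$ and $\alpha$ and hence left-divides $\delta$ by the gcd property. Comparing norms in an equation $\delta=\pi_{1}\sigma$ gives $\Norm(\sigma)=1$, so $\sigma$ is a unit; therefore $\pi_{1}$ is a right associate of $\delta=\pi$, as claimed.

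I expect the only delicate points to be the bookkeeping of left versus right: the divisibility in the statement is left divisibility, so the pertinent ideals are right ideals, and their principality has to be deduced from the right-Euclidean property obtained via conjugation. The other point to keep in mind is that the hypothesis reads as $\alpha\notin p\BE=\BE p$, which is unambiguous because $p$ is central. Everything else reduces to multiplicativity of the quaternion norm. Note that Theorem~\ref{quat_irred} is not needed for this argument, although it retroactively identifies $\pi$ as an irreducible element of $\BE$.
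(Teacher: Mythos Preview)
Your argument is correct. Note that the paper itself does not prove this lemma---it is quoted verbatim from \cite{GKMS08}---so there is no in-paper proof to compare against; that said, your approach (take $\pi$ to be a generator of the right ideal $p\BE+\alpha\BE$, then eliminate the norm values $1$ and $p^{2}$ using the hypotheses) is the standard gcd argument in a one-sided principal ideal domain and is essentially what one finds in the cited source. Your care in deriving right-principality from the stated left-Euclidean property via conjugation, and in tracking that left divisibility corresponds to membership in right ideals, is exactly the bookkeeping required.
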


\begin{lm}[\cite{GKMS08}, Lemma 2.6]\label{twin_primeprop}
  Suppose that $\theta,\eta,\pi\in\BE$ such that
  $\Norm(\pi)=p$ is a prime in~$\BZ$. If $\pi\mid\theta$,
  $p\mid\konj{\theta}\eta$ but $p$ does not divide $\theta$,
  then $\pi\mid\eta$.
\end{lm}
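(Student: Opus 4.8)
The plan is to argue by contradiction, using only that $\BE$ is a principal ideal domain (it is Euclidean) and that a quaternion of prime norm is irreducible. Since $\Norm(\pi)=p$ is a prime of $\BZ$, Theorem~\ref{quat_irred} tells us that $\pi$ is irreducible in $\BE$, so the principal right ideal $\pi\BE$ is maximal among the proper right ideals of $\BE$. Recall also that $\pi\mid\eta$ means exactly $\eta\in\pi\BE$. So I would assume, for contradiction, that $\pi\nmid\eta$, i.e.\ $\eta\notin\pi\BE$; then the right ideal $\pi\BE+\eta\BE$ properly contains $\pi\BE$, hence equals $\BE$ by maximality, and in particular there are $x,y\in\BE$ with $\pi x+\eta y=1$. (One can equally well run the whole argument with left ideals and the conjugates $\konj\pi,\konj\eta$, invoking only the left-Euclidean property stated in the text; the computation below is symmetric.)

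Next I would feed the two divisibility hypotheses into this relation. Writing $\theta=\pi\theta'$ with $\theta'\in\BE$ (possible since $\pi\mid\theta$) and conjugating gives $\konj\theta=\konj{\theta'}\konj\pi$, so that the ``mixed'' product
\[
\konj\theta\pi=\konj{\theta'}\konj\pi\pi=\konj{\theta'}\Norm(\pi)=p\konj{\theta'}
\]
is divisible by $p$. Writing $\konj\theta\eta=p\gamma$ with $\gamma\in\BE$ (possible since $p\mid\konj\theta\eta$) and left-multiplying $\pi x+\eta y=1$ by $\konj\theta$ then gives
\[
\konj\theta=\konj\theta\pi x+\konj\theta\eta y=p\konj{\theta'}x+p\gamma y=p\bigl(\konj{\theta'}x+\gamma y\bigr),
\]
hence $p\mid\konj\theta$, and so $p\mid\theta$ by conjugating once more. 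This contradicts the hypothesis that $p$ does not divide $\theta$, and therefore $\pi\mid\eta$.

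The one point requiring insight, and the step I expect to be the real obstacle, is noticing that $\pi\mid\theta$ makes the ``wrong-handed'' product $\konj\theta\pi$ (not $\pi\konj\theta$) divisible by $p$; combined with the B\'ezout relation coming from the assumed coprimality of $\pi$ and $\eta$, this collapses $\konj\theta$ itself modulo $p$. Everything else is formal book-keeping: there is no need to reduce modulo $p$ or to inspect the structure of $\BE/p\BE$, no separate treatment of $p=2$, and even Lemma~\ref{quat_lnko} on norm-$p$ factors is not needed.
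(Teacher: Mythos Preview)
Your argument is correct. The key identity $\konj\theta\,\pi=\konj{\theta'}\,\konj\pi\,\pi=p\,\konj{\theta'}$, combined with the B\'ezout relation $\pi x+\eta y=1$ obtained from the maximality of $\pi\BE$, does force $p\mid\konj\theta$ and hence $p\mid\theta$, contradicting the hypothesis. Your parenthetical remark is also to the point: since the paper only asserts that $\BE$ is \emph{left} Euclidean, the clean way to justify the B\'ezout step is to conjugate first, work with the left ideal $\BE\konj\pi+\BE\konj\eta=\BE$ (using that $\konj\pi$ is irreducible and left ideals are principal), and then conjugate back to obtain $\pi x+\eta y=1$; the rest of the computation is unchanged.

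As for the comparison: the present paper does not prove this lemma at all---it is quoted verbatim from \cite{GKMS08} as background, with no argument given here---so there is no ``paper's own proof'' to set against yours. Your proof is a standard and efficient one, relying only on the Euclidean property of $\BE$ and the irreducibility of prime-norm elements (Theorem~\ref{quat_irred}); in particular you are right that Lemma~\ref{quat_lnko} is not needed.
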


We shall reduce questions to quaternions having odd norm,
using the following assertion.

\begin{clm}\label{twin_even}
Let $\alpha=a+bi+cj+dk\in\BL$.
\begin{enumerate}
\item[$(1)$] There exists an element $\beta\in\BL$ such that
  $\alpha=(1+i)\beta$ if and only if $a\equiv b~(2)$ and
  $c\equiv d~(2)$. The analogous statements hold for $1+j$
  and $1+k$.
\item[$(2)$] If\/ $8\mid\Norm(\alpha)$, then each
  coefficient of $\alpha$ is even.
\item[$(3)$] If\/ $\Norm(\alpha)\equiv 4~(8)$, then
  $\alpha=(1+i)\beta$ for some $\beta\in\BL$.
\item[$(4)$] If\/ $\Norm(\alpha)\equiv 2~(4)$, then there is
  exactly one element $\eta\in\{1+i,1+j,1+k\}$ such that
  $\alpha=\eta\beta$ for some $\beta\in\BL$.
\end{enumerate}
\end{clm}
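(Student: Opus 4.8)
The plan is to prove part $(1)$ by an explicit left division, and then to read parts $(2)$--$(4)$ off the residue of $\Norm(\alpha)=a^2+b^2+c^2+d^2$ modulo $8$. For part $(1)$, observe that $(1+i)(1-i)=(1-i)(1+i)=2$, so $\alpha=(1+i)\beta$ for some $\beta\in\BL$ if and only if $\tfrac12(1-i)\alpha\in\BL$, and in that case $\beta=\tfrac12(1-i)\alpha$ is forced. A direct computation gives
\[
(1-i)(a+bi+cj+dk)=(a+b)+(b-a)i+(c+d)j+(d-c)k\,,
\]
whose four coefficients are simultaneously even exactly when $a\equiv b$ and $c\equiv d\pmod 2$. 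The analogous products $(1-j)\alpha=(a+c)+(b-d)i+(c-a)j+(d+b)k$ and $(1-k)\alpha=(a+d)+(b+c)i+(c-b)j+(d-a)k$ yield the divisibility conditions $a\equiv c,\ b\equiv d\pmod 2$ for $1+j$ and $a\equiv d,\ b\equiv c\pmod 2$ for $1+k$. It is worth recording that for each $\eta\in\{1+i,1+j,1+k\}$ one has $\konj\eta\,\eta=\Norm(\eta)=2$, so whenever $\eta$ left-divides $\alpha$ the quotient is the uniquely determined $\beta=\tfrac12\konj\eta\,\alpha$.

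For parts $(2)$--$(4)$ I would use that, modulo $8$, the square of an integer is $0$ when the integer is divisible by $4$, is $4$ when the integer is $\equiv 2\pmod 4$, and is $1$ when the integer is odd. Let $r$ be the number of odd coefficients of $\alpha$ and $s$ the number of its coefficients $\equiv 2\pmod 4$; then $\Norm(\alpha)\equiv r+4s\pmod 8$ with $r+s\le 4$. A short enumeration of the solutions of $r+4s\equiv 0\pmod 8$ under this constraint gives $r=0$, proving $(2)$ (all coefficients even). The solutions of $r+4s\equiv 4\pmod 8$ are $r=4$ (all coefficients odd) and $r=0$ with $s\in\{1,3\}$ (all coefficients even); in the former case part $(1)$ with $\eta=1+i$ applies directly, and in the latter $\alpha=2\gamma$ with $\gamma\in\BL$, whence $\alpha=(1+i)\bigl((1-i)\gamma\bigr)$ — this proves $(3)$. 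Finally, $\Norm(\alpha)\equiv 2\pmod 4$ forces $r=2$, i.e.\ exactly two coefficients of $\alpha$ are odd; the three possible unordered ``odd pairs'' up to complementation are $\{a,b\}$, $\{a,c\}$, $\{a,d\}$, and by part $(1)$ each of them selects exactly one of $1+i,1+j,1+k$ as a left divisor, the other two failing because they would force an odd coefficient to be congruent to an even one mod $2$. Together with the uniqueness of $\beta$ noted above, this gives $(4)$.

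The only delicate point is keeping the signs straight in the three quaternion products $(1-i)\alpha$, $(1-j)\alpha$, $(1-k)\alpha$ of part $(1)$; once those are pinned down, the rest is a finite, routine check of residues modulo $8$.
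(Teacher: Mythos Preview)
Your proof is correct and follows essentially the same route as the paper: part~$(1)$ by direct left division (the paper just says ``direct calculation''), and parts~$(2)$--$(4)$ by the mod~$8$ analysis of $a^2+b^2+c^2+d^2$ based on the parities of the coefficients. Your bookkeeping with $r$ and $s$ is a bit more systematic than the paper's terse use of $m^2\equiv 1\pmod 8$ for odd~$m$, but the content is the same.
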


\begin{proof}
$(1)$ can be shown by direct calculation. Since $m^2\equiv
  1~(8)$ for every odd integer~$m$, we see that
  $\Norm(\alpha)$ is divisible by $8$ if and only if $a$,
  $b$, $c$, $d$ are all even, so $(2)$ holds. By the same
  argument, if $4\mid\Norm(\alpha)$, then $a$, $b$, $c$, $d$
  are all even, or are all odd. In the first case we have
  $(3)$, since $2=(1+i)(1-i)$. In the second case $(3)$~also
  holds by~$(1)$. Now suppose that $\Norm(\alpha)\equiv
  2~(4)$. Then two numbers of $a$, $b$, $c$, $d$ are even
  and two are odd. If $a\equiv b~(2)$, then $c\equiv d~(2)$,
  so $(1)$ shows that $(1+i)$ can be pulled out
  from~$\alpha$, but $1+j$ and $1+k$ cannot.
\end{proof}

Next we investigate quaternions with integral coefficients
having odd norm. Let $K=\{1,i,j,k\}$, and write a general
quaternion $\alpha\in\BL$ as $a_1+a_ii+a_jj+a_kk$. For $g\in
K$ define
\[
S_g=\set{a_1+a_ii+a_jj+a_kk\in\BL}{$a_g\not\equiv a_h~(2)$
  for every $h\ne g$, where $h\in K$}\,.
\]
So for example the elements of $S_i$ are those where the
coefficient of $i$ is odd and the other coefficients are
even, or vice versa. Let $*$ denote the Klein-group
multiplication on~$K$ (which is quaternion-multiplication,
but disregards the signs). Call two nonzero quaternions
\emph{twins} if so are the vectors formed by their
coefficients. The following claim summarizes well-known,
easy facts.

\begin{clm}\label{Sg_elementary}
Let $\alpha,\beta\in\BL$ with odd norm and
$2\sigma=1+i+j+k$.
\begin{enumerate}
\item[$(1)$] Both $\alpha$ and $\beta$ belong to exactly one
  of the sets~$S_g$. If they are twins, then they cannot
  belong to the same~$S_g$.
\item[$(2)$] If\/ $\Norm(\alpha)\equiv 1~(4)$ then $\alpha\in
  S_g$ if and only if $\alpha\equiv g~(2)$ in $\BL$ if and
  only if $\alpha\equiv g~(2)$ in $\BE$.
\item[$(3)$] If\/ $\Norm(\alpha)\equiv 3~(4)$, then
  $\alpha\in S_g$ if and only if $\alpha\equiv
  2\sigma-g~(2)$ in $\BL$ if and only if $\alpha\equiv
  2\sigma-g~(2)$ in $\BE$.
\item[$(4)$] If\/ $\alpha\in S_g$ and $\beta\in S_h$, then
  $\alpha\beta\in S_{g*h}$.
\item[$(5)$] If $\alpha\in S_1$ and $\gamma\in\BE$, then
  $\gamma\in S_g\iff \alpha\gamma\in S_g\iff \gamma\alpha\in
  S_g$.
\end{enumerate}
\end{clm}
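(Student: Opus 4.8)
The plan is to base everything on one elementary observation about parities. If $\alpha\in\BL$ has odd norm, then among its four coordinates either exactly one is odd or exactly three are odd, since a sum of four squares is odd precisely when an odd number of the terms are odd; call these the \emph{type A} and \emph{type B} cases. Being type A with the odd coordinate in position $g$ is the same as $\alpha\equiv g\pmod{2\BL}$, and being type B with the even coordinate in position $g$ is the same as $\alpha\equiv 2\sigma-g\pmod{2\BL}$, because $2\sigma=1+i+j+k$ has all coordinates odd. This already gives the first assertion of $(1)$: $\alpha$ lies in $S_g$ exactly for the $g$ that is its odd coordinate (type A) or its even coordinate (type B), and in no other $S_g$. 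It also pins down the norm modulo $4$: since an even square is $\equiv 0\pmod 4$ and an odd square is $\equiv 1\pmod 8$, a type A element has norm $\equiv 1\pmod 4$ and a type B element has norm $\equiv 3\pmod 4$.

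Parts $(2)$ and $(3)$ are then immediate. Under the hypothesis $\Norm(\alpha)\equiv 1\pmod 4$ only type A can occur, so ``$\alpha\in S_g$'' is equivalent to ``$\alpha\equiv g\pmod{2\BL}$''; and since $2\BE=2\BL\cup(2\sigma+2\BL)$, a congruence $\alpha\equiv g\pmod{2\BE}$ either already holds modulo $2\BL$ or forces $\alpha-g$ to have all coordinates odd, i.e. forces $\alpha$ to be type B, contradicting the norm assumption — so the two congruences are equivalent. Part $(3)$ is the same argument with type B and $2\sigma-g$. For the second assertion of $(1)$, suppose twins $\alpha,\beta$ both lay in $S_g$. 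If both are type A, their coordinate vectors are nonzero mod $2$ only in position $g$, so their inner product is odd; if both are type B, the inner product is a sum of three odd products, again odd; either way they fail to be orthogonal. If one is type A and the other type B, their norms are $\equiv 1$ and $\equiv 3\pmod 4$ and cannot be equal. Hence twins lie in distinct sets $S_g$.

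For $(4)$ I would reduce modulo $2$, working in $\BL/2\BL\cong\mathbb{F}_2[K]$, the group algebra of the Klein four-group $(K,*)$ (the signs in quaternion multiplication vanish mod $2$, which is exactly why this group is $K$ with $*$). Let $s$ be the image of $1+i+j+k$. Then $s$ is central, $s^2=0$, and $gs=sg=s$ for every $g\in K$, so $sx=s$ whenever $x$ is the reduction of a quaternion of odd norm (an odd number of its coordinates are nonzero). By the opening observation, an odd-norm $\alpha$ lies in $S_g$ exactly when its reduction is $\bar g$ or $\bar g+s$. Writing the reductions of $\alpha\in S_g$ and $\beta\in S_h$ as $\bar g+\varepsilon s$ and $\bar h+\delta s$ with $\varepsilon,\delta\in\{0,1\}$ and multiplying, the mixed terms collapse through $\bar g\,s=s\,\bar h=s$ and $s^2=0$ to give reduction $\overline{g*h}+(\varepsilon+\delta)s$; since $\Norm(\alpha\beta)$ is odd, this means $\alpha\beta\in S_{g*h}$, which is $(4)$. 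Statement $(5)$ is the case $g=1$: if $\gamma\in\BL$, then $(4)$ applies in both directions (whenever either side holds the relevant quaternion is in some $S_h$, hence has odd norm, hence so does $\gamma$), giving $\gamma\in S_h\iff\alpha\gamma\in S_h\iff\gamma\alpha\in S_h$; and if $\gamma\in\BE\setminus\BL$ then $\gamma$ lies in no $S_h$, while $\alpha\gamma$ and $\gamma\alpha$ are not in $\BL$ either, since $\alpha\sigma\notin\BL$ because $2\alpha\sigma=\alpha(1+i+j+k)$ has odd real part (as $\Norm(\alpha)$ is odd), so the equivalences hold vacuously.

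None of this is deep — the claim collects ``well-known, easy facts'' — so the only thing to be careful about is keeping the two parity patterns (type A versus type B) apart and not conflating congruences in $\BL$ with congruences in $\BE$; the one small device that pays off is isolating the central, square-zero, ``absorbing'' element $s$, which turns the multiplicativity statements $(4)$ and $(5)$ into a two-line computation.
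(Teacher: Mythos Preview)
Your proof is correct and, for parts $(1)$--$(4)$, is essentially the natural filling-in of what the paper leaves to the reader (the paper proves only $(1)$ and $(5)$ explicitly). Your group-algebra packaging of $(4)$ via $\BL/2\BL\cong\mathbb{F}_2[K]$ with the central square-zero absorber $s$ is a clean device; the paper simply says ``left to the reader'', so there is nothing to compare against there.

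The one place where your argument genuinely diverges from the paper is $(5)$. You handle $\gamma\in\BE\setminus\BL$ by a direct computation: $2\alpha\sigma=\alpha(1+i+j+k)$ has real part $a-b-c-d$, which is odd because $\Norm(\alpha)$ is, so $\alpha\sigma\notin\BL$ and hence $\alpha\gamma\notin\BL$. The paper instead argues algebraically: if $\alpha\gamma=\delta\in S_g\subset\BL$, then $\Norm(\alpha)\gamma=\konj{\alpha}\delta\in\BL$, and since $\Norm(\alpha)$ is odd this forces $\gamma\in\BL$; then $\gamma$ has odd norm and $(4)$ finishes. Both arguments are short; the paper's trick of multiplying by $\konj{\alpha}$ avoids any coordinate computation and would generalize more readily, while your computation is more self-contained and makes the obstruction visible (the ``odd real part'' of $\alpha(1+i+j+k)$).
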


\begin{proof}
  If $\Norm(\alpha)\equiv 1~(4)$, then $\alpha$ has exactly
  one odd component. If $\Norm(\alpha)\equiv 3~(4)$, then
  $\alpha$ has exactly one even component. Suppose that
  $\alpha$ and $\beta$ are twins in $S_g$. Then
  $\Norm(\alpha)=\Norm(\beta)\equiv 1~(4)$ implies that the
  scalar product of the corresponding vectors is congruent
  to $1$ modulo~$2$, which is impossible, since they are
  orthogonal. If $\Norm(\alpha)=\Norm(\beta)\equiv 3~(4)$,
  then this scalar product is congruent to $3$ modulo~$2$,
  also a contradiction. This shows~$(1)$. The proofs of
  $(2)-(4)$ are left to the reader.

  Suppose that $\alpha\in S_1$ and $\alpha\gamma=\delta\in
  S_g$. Then $\Norm(\alpha)\gamma
  =\konj{\alpha}\delta\in\BL$. Since $\Norm(\alpha)$ is odd,
  this implies that $\gamma\in\BL$. The norm of $\gamma$ is
  odd, so $(5)$ follows from~$(4)$.
\end{proof}

Call a quaternion $\alpha$ \emph{primary} if $\alpha\in S_1$
and $a_1+a_i+a_j+a_k\equiv 1~(4)$. Obviously, if $\alpha\in
S_1$, then exactly one of $\alpha$ and $-\alpha$ is primary.

\begin{clm}\label{Sg_elementary2}
The following hold.
\begin{enumerate}
\item[$(1)$] If $\gamma\in\BE$ has odd norm, then $\gamma$
  has exactly one primary left associate, and exactly one
  primary right associate.
\item[$(2)$] The primary quaternions form a semigroup under
  multiplication.  Moreover, if the (left or right) quotient
  of two primary quaternions is in $\BE$, then it is also
  primary.
\item[$(3)$] Let $\alpha$ be a primary quaternion and
  $\vep\in\BE$ a unit. Then $\vep\alpha\in\BL$ (or
  $\alpha\vep\in\BL$) if and only if $\vep\in Q=\{\pm 1, \pm
  i, \pm j, \pm k\}$.
\end{enumerate}
\end{clm}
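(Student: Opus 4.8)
The plan is to prove the three parts more or less in order, using Claims \ref{Sg_elementary} and \ref{twin_even} as the main tools, together with the fact that $\BE$ has exactly $24$ units which split as $Q\cup\sigma Q\cup\sigma^2 Q$ (with $Q=\{\pm1,\pm i,\pm j,\pm k\}$ and $2\sigma=1+i+j+k$), and that $Q$ is precisely the set of units lying in $\BL$.

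For $(1)$: given $\gamma\in\BE$ of odd norm, first observe $\gamma$ has a left associate in $\BL$; any two left associates in $\BL$ differ by a unit in $\BL$, i.e.\ by an element of $Q$. So it suffices to show that among the eight quaternions $\vep\gamma$ with $\vep\in Q$, exactly one is primary. By Claim \ref{Sg_elementary}(1) exactly one of $\gamma$ and $-\gamma$ lies in $S_1$ — wait, more carefully: $\gamma$ lies in some $S_g$, and by Claim \ref{Sg_elementary}(4) (noting each $\vep\in Q$ lies in $S_{|\vep|}$ where $|{\pm g}|=g$), multiplying by the four choices $\{1,i,j,k\}\subseteq Q$ permutes the four sets $S_1,S_i,S_j,S_k$ simply transitively in the appropriate way, so exactly two of the eight elements $\vep\gamma$ lie in $S_1$, and they are negatives of each other; of those two, exactly one is primary by the remark preceding the claim. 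The right-associate statement is symmetric, using Claim \ref{Sg_elementary}(5)/(4) on the other side. The only subtlety is checking that $\vep\mapsto\vep\gamma$ genuinely hits all four $S_g$'s; this is where Claim \ref{Sg_elementary}(4) does the work once one records which $S_g$ each element of $Q$ belongs to.

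For $(2)$: closure is immediate from Claim \ref{Sg_elementary}(4) (product of two elements of $S_1$ is in $S_1$) together with multiplicativity of the map $\alpha\mapsto(a_1+a_i+a_j+a_k)\bmod 4$ restricted to $S_1$ — one checks that on $S_1$ the coefficient-sum is odd and that its residue mod $4$ is multiplicative, so a product of two coefficient-sums $\equiv1$ is again $\equiv1$. For the quotient statement: if $\alpha=\beta\delta$ with $\alpha,\beta$ primary and $\delta\in\BE$, then $\delta$ has odd norm; by Claim \ref{Sg_elementary}(5) (with $\beta\in S_1$) $\delta\in S_g\iff\alpha\in S_g$, so $\delta\in S_1$; and then the coefficient-sum multiplicativity forces $\delta$'s coefficient-sum $\equiv1\pmod4$ as well, so $\delta$ is primary. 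The left-quotient case is the mirror image.

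For $(3)$: one direction is trivial since $Q\subseteq\BL$ and $\BL$ is a ring, so $\vep\in Q$ and $\alpha\in\BL$ give $\vep\alpha\in\BL$. Conversely suppose $\vep\alpha\in\BL$ with $\alpha$ primary (so $\alpha\in\BL$). Then $\vep=(\vep\alpha)\alpha^{-1}$, and since $\alpha$ has odd norm $\Norm(\alpha)\alpha^{-1}=\konj\alpha\in\BL$, so $\Norm(\alpha)\vep\in\BL$; as $\Norm(\alpha)$ is odd this is not yet enough, so instead argue: $\vep\alpha\in S_{|\vep|}$ by Claim \ref{Sg_elementary}(5) applied with the $S_1$-element $\alpha$ on the right, hence $\vep\alpha$ has all coefficients of the same parity only when $|\vep|$... — cleaner: if $\vep\notin Q$ then $\vep\in\sigma Q$ or $\sigma^2Q$, and a short direct computation shows $\sigma\alpha\notin\BL$ for any $\alpha\in S_1$ (the half-integer coefficients of $\sigma$ combine with the parity pattern of $S_1$ to produce a genuine half-integer), which rules out all twelve non-$Q$ units. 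The $\alpha\vep$ case is symmetric. The main obstacle throughout is bookkeeping: keeping straight the interaction between the $S_g$ grading, the coefficient-sum-mod-$4$ invariant, and the coset decomposition of the unit group; none of the individual verifications is deep, but one must be careful not to conflate the $S_1$ condition with the primary condition, and to use the correct one-sided version of Claim \ref{Sg_elementary}(5) on each side.
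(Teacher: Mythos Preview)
Your proposal is correct and in the same spirit as the paper---which in fact proves only~$(3)$, in a single sentence, and leaves $(1)$ and $(2)$ to the reader. Two points of comparison are worth making.

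For~$(3)$ you circle around the clean argument and then retreat to a direct computation with $\sigma$. The paper's one-liner, which you nearly reach, is just Claim~\ref{Sg_elementary}\,$(5)$ applied verbatim: if $\alpha\in S_1$ and $\vep\alpha\in\BL$, then (having odd norm) $\vep\alpha\in S_g$ for some $g$, and the equivalence $\gamma\in S_g\iff\gamma\alpha\in S_g$ with $\gamma=\vep$ forces $\vep\in S_g\subseteq\BL$, hence $\vep\in Q$. No case analysis on the $\sigma$-cosets is needed.

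For~$(1)$ your reduction ``any two left associates in $\BL$ differ by a unit in $\BL$'' is exactly~$(3)$ (or its immediate extension from primary $\alpha$ to any $\alpha\in\BL$ of odd norm), so logically $(3)$ should come first; once that is in place your counting via Claim~\ref{Sg_elementary}\,$(4)$ is fine. For~$(2)$ the key unproved assertion is that the coefficient-sum is multiplicative modulo~$4$ when one factor lies in $S_1$. This is true: expanding, one finds that the coefficient-sum of $\alpha\beta$ differs from the product of the coefficient-sums by $-2\big(b(f+h)+c(f+g)+d(g+h)\big)$ (with $\alpha=a+bi+cj+dk$, $\beta=e+fi+gj+hk$), and when $\beta\in S_1$ the three quantities $f+h$, $f+g$, $g+h$ are all even, so the discrepancy vanishes modulo~$4$. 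With that check in hand your semigroup and quotient arguments go through.
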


\begin{proof}
Statement $(3)$ clearly follows from
Claim~\ref{Sg_elementary}~$(5)$. The rest of the proof is
left to the reader.
\end{proof}

We close this section with two counting results. Call a
quaternion with integral coefficients \emph{primitive}, if
its coefficients are relatively prime.

\begin{clm}[Jacobi]\label{twin_primary_count}
  Let $N>1$ be odd. Then the number of primary primitive
  quaternions with norm $N$ is $h(N)=N\prod_p
  \big(1+(1/p)\big)$, where $p$ runs over the prime divisors
  of $N$.
\end{clm}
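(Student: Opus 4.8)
The plan is to count Lipschitz quaternions $\alpha$ with $\Norm(\alpha)=N$, $N>1$ odd, that are primitive and primary, by first counting all such $\alpha$ (ignoring primitivity and the normalization) and then sieving. The classical Jacobi four-square theorem states that the number of $\alpha\in\BL$ with $\Norm(\alpha)=N$ equals $8\sigma(N)$ for odd $N$ (no divisibility-by-$4$ complication arises since $N$ is odd). I would take this as known, or alternatively derive it quickly from Theorem~\ref{quat_irred} together with unique factorization in~$\BE$: an element of norm~$N$ factors into irreducibles of prime norm, the count of factorizations is multiplicative, and at a prime power $p^k$ one tallies the $24(p+1)$ quaternions of norm~$p$ against the $24$ units to get the contribution $\sigma(p^k)$ after dividing by $24$; multiplying back by $24$ and observing that exactly $8$ of the $24$ unit multiples of a primitive $\alpha$ land in~$\BL$ (namely the multiples by $Q=\{\pm1,\pm i,\pm j,\pm k\}$, by Claim~\ref{Sg_elementary2}~$(3)$ — here the $S_1$-hypothesis is harmless since primitivity of odd norm forces $\alpha$ into a single $S_g$) recovers the $8\sigma(N)$ count; but of course citing Jacobi directly is cleaner.

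Next I would pass from ``all'' to ``primitive'': if $\alpha\in\BL$ has $\Norm(\alpha)=N$ and its coefficients have gcd $d$, then $d^2\mid N$ and $\alpha/d$ is a primitive Lipschitz quaternion of norm $N/d^2$. Writing $r(N)$ for the number of Lipschitz quaternions of norm~$N$ and $r^*(N)$ for the primitive ones, this gives $r(N)=\sum_{d^2\mid N} r^*(N/d^2)$; since $N$ is odd and squarefree-part considerations reduce everything to odd moduli, Möbius inversion on the square divisors yields $r^*(N)=\sum_{d^2\mid N}\mu(d)\,r(N/d^2)=8\sum_{d^2\mid N}\mu(d)\,\sigma(N/d^2)$. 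For $N=p^k$ this telescopes: the $d=1$ and $d=p$ terms give $8\bigl(\sigma(p^k)-\sigma(p^{k-2})\bigr)=8p^{k-1}(p+1)$ for $k\ge2$, and $8(p+1)$ for $k=1$, i.e.\ $r^*(p^k)=8p^{k-1}(p+1)$ in all cases $k\ge1$; since $r^*$ is multiplicative (the gcd of a product of primitive-at-coprime-norms quaternions is~$1$, or one argues via the multiplicativity of $r$ and of the square-divisor convolution) this gives $r^*(N)=8N\prod_{p\mid N}(1+1/p)$ for odd $N>1$.

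Finally I would divide out the group action that collapses $8$ Lipschitz representatives to one primary one. Fix a primitive $\alpha\in\BL$ of odd norm. Its $24$ unit multiples $\vep\alpha$ ($\vep\in\BE^\times$) are distinct, lie in~$\BE$, all are primitive of the same norm, and by Claim~\ref{Sg_elementary2}~$(3)$ exactly those with $\vep\in Q$ lie in~$\BL$ — so each $\BE^\times$-orbit meets $\BL$ in exactly $8$ elements. By Claim~\ref{Sg_elementary2}~$(1)$ each orbit contains exactly one primary quaternion (uniqueness of the primary left associate), and that primary element is itself in~$\BL$ (it lies in $S_1$, hence has all coefficients integral in the relevant sense; more directly, one of the $8$ Lipschitz elements of the orbit is the primary one, since primarity is a condition on $S_1$-membership plus a congruence, and these $8$ are exactly the $Q$-multiples, among which the $S_1$-representative sits). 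Hence the primary primitive quaternions of norm~$N$ number $r^*(N)/8 = N\prod_{p\mid N}(1+1/p) = h(N)$, as claimed. The main obstacle is the bookkeeping in this last paragraph — checking that the unique primary associate of a \emph{Lipschitz} primitive quaternion is again Lipschitz, and that it is the distinguished one among the $8$ — but Claim~\ref{Sg_elementary2} has been set up precisely to make this mechanical.
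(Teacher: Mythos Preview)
The paper does not prove this claim; it is stated as a classical result attributed to Jacobi and used as a black box. Your argument is correct and is the standard one: Jacobi's four-square count $r(N)=8\sigma(N)$ for odd~$N$, M\"obius-sieved over square divisors, yields $r^*(N)=8N\prod_{p\mid N}(1+1/p)$ primitive Lipschitz quaternions of norm~$N$, and left multiplication by $Q$ partitions these into orbits of size~$8$, each containing exactly one primary element (multiplication by the eight elements of $Q$ permutes the four sets $S_g$ with each hit twice, and for the two elements landing in $S_1$ exactly one satisfies the sign condition).

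Two small wobbles in your write-up. First, your appeal to Claim~\ref{Sg_elementary2}~$(3)$ is premature: as stated, that claim assumes $\alpha$ is \emph{primary}, whereas you are starting from an arbitrary primitive $\alpha\in\BL$. Cleaner is to observe directly that for $\vep\in Q$ the coefficients of $\vep\alpha$ are a signed permutation of those of~$\alpha$, so the $Q$-orbit stays in~$\BL$, stays primitive, and meets each $S_g$ in exactly two (opposite) elements; then the primary one is singled out. Second, your parenthetical justification ``the gcd of a product of primitive-at-coprime-norms quaternions is~$1$'' is not obviously true in the noncommutative setting and is in any case unnecessary: multiplicativity of $r^*/8$ follows immediately from your alternative remark that it is the Dirichlet convolution of the multiplicative function $\sigma$ with the multiplicative function $n\mapsto\mu(\sqrt n)$ (supported on squares).
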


A \emph{pure} quaternion is one with real part zero.

\begin{lm}[see \cite{GKMS08}, Theorem 4.2]\label{3repr}
  Let $\theta\in\BE$ be a primitive pure quaternion whose
  norm is a square. Then $\theta$ can be written as $\gamma
  \,i\,\konj{\gamma}$ for some $\gamma\in\BE$. Here $\gamma$
  is uniquely determined in the sense that any two such
  elements $\gamma$ are right associates via a unit in
  $\{1,-1,i,-i\}$.
\end{lm}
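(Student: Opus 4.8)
The plan is to combine the standard ``three-squares'' representation theorem for quaternions with the uniqueness apparatus already built up in this section. Since $\theta$ is pure and primitive with $\Norm(\theta)$ a square, write $\Norm(\theta)=n^2$. The existence half should follow from Theorem~4.2 of \cite{GKMS08} cited here: a primitive pure quaternion whose norm is a perfect square lies in the ``cone'' of elements of the form $\gamma\,i\,\konj{\gamma}$, because conjugation by $\gamma$ is (up to the factor $\Norm(\gamma)$) a rotation of the pure quaternions, and every primitive pure vector of square norm is a rotation image of $n\cdot i$ that is itself realized by a Hurwitz quaternion $\gamma$ with $\Norm(\gamma)=n$. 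So existence is essentially quoted; the real content is the uniqueness clause.

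For uniqueness, suppose $\gamma\, i\,\konj{\gamma}=\delta\, i\,\konj{\delta}$ with $\Norm(\gamma)=\Norm(\delta)=n$. First I would handle the prime-power (hence, by multiplicativity, the general odd) case. Reduce to $n$ odd by peeling off factors of $1+i$: if $n$ is even, Claim~\ref{twin_even} lets us factor $\gamma=(1+i)\gamma_1$ etc., and since $(1+i)\,i\,\konj{(1+i)}=2i$ is a scalar multiple of $i$, the identity $\gamma\,i\,\konj\gamma=\delta\,i\,\konj\delta$ descends to $\gamma_1,\delta_1$; iterate. With $n$ odd, set $\mu=\konj{\delta}\gamma$, so $\Norm(\mu)=n^2$ and the relation becomes $\mu\, i = i\,\mu$, i.e.\ $\mu$ commutes with $i$, which forces $\mu=x+yi$ for rationals $x,y$ (in fact half-integers) with $x^2+y^2=n^2$. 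The key step is then to show that such a $\mu$ must actually be a \emph{unit times} a rational integer, equivalently that the only way to have $\delta = \gamma\mu^{-1}\cdot(\text{something})$\dots — more precisely, one argues prime by prime: if an odd prime $p\mid n$, then since $x^2+y^2=n^2$ and $\Norm(\gamma)=\Norm(\delta)=n$, any irreducible quaternion $\pi$ of norm $p$ dividing $\gamma$ on the right must also divide $\delta$ on the right (using Lemma~\ref{twin_primeprop} with $\theta=\gamma$, $\eta=\delta$, noting $p\mid\konj\gamma\delta$ because $\konj\gamma\delta\cdot i = i\cdot\konj\gamma\delta$ places $\konj\gamma\delta$ in the commutant of $i$ and its norm $n^2$ is divisible by $p^2$). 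Cancelling $\pi$ from the right of both and repeating the descent on $\Norm=n/p$, we end at $n=1$, where $\gamma,\delta$ are units; then $\gamma\,i\,\konj\gamma=\delta\,i\,\konj\delta$ among units directly gives $\konj\delta\gamma$ in the commutant of $i$ inside the unit group, which is exactly $\{1,-1,i,-i\}$.

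The step I expect to be the main obstacle is the prime-by-prime cancellation: making rigorous that $\pi\mid\gamma$ (right) together with the identity $\gamma\,i\,\konj\gamma=\delta\,i\,\konj\delta$ yields $\pi\mid\delta$ (right), and that after cancelling, the smaller quaternions still satisfy an identity of the same shape with $i$ replaced by a \emph{unit associate} of $i$ — one must track how $\pi$ and $\konj\pi$ interact with the central-looking factor $i$, and absorb the resulting unit into the induction hypothesis. A clean way to package this is to pass to $\BE$-ideals: right-divisibility by norm-$p$ irreducibles is controlled by Lemma~\ref{quat_lnko}, which says the relevant $\pi$ is unique up to right association, so the greatest common right divisor of $\gamma$ and $\delta$ of norm dividing $n$ is well defined, and one shows $\gamma$ and $\delta$ have the \emph{same} such divisor by comparing the two sides of the identity locally at each $p$. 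Once $\gamma$ and $\delta$ are shown to be right associates, $\gamma=\delta\vep$ with $\vep\in\BE$ a unit, and substituting back forces $\vep\, i\,\konj\vep=i$, i.e.\ $\vep$ commutes with $i$; among the $24$ units the commutant of $i$ is precisely $\{1,-1,i,-i\}$, which is the claimed ambiguity. Finally, invoke Claim~\ref{twin_even}~$(1)$–$(3)$ once more to lift the conclusion from the odd part back through the $(1+i)$-factors, observing that these factors are determined and commute with the whole argument up to the same $\{1,-1,i,-i\}$-ambiguity.
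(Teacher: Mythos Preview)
The paper does not actually prove this lemma: it is stated with the attribution ``see \cite{GKMS08}, Theorem~4.2'' and no argument is given. So there is nothing to compare your proof against; the question is only whether your sketch stands on its own.

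Your reduction to odd $n$ and the observation that $\mu=\konj{\delta}\gamma$ commutes with $i$ (hence $\mu=x+yi\in\BZ[i]$ with $x^2+y^2=n^2$) are correct, and the endgame ``$\konj{\delta}\gamma$ is a unit commuting with $i$, hence in $\{\pm1,\pm i\}$'' is fine. The descent in between, however, has two real problems.

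First, you never use that $\theta$ is \emph{primitive}, and without it the uniqueness statement is simply false: for $\gamma=-1+2i$ and $\delta=1+2i$ one has $\gamma i\konj{\gamma}=\delta i\konj{\delta}=5i$, yet $\konj{\delta}\gamma=3+4i$ is not $5$ times a unit. Your justification ``$p\mid\konj{\gamma}\delta$ because $\konj{\gamma}\delta$ commutes with $i$ and has norm $n^2$'' is exactly what fails here: $p^2\mid x^2+y^2$ does not force $p\mid x$ and $p\mid y$ when $p\equiv 1\pmod 4$. So the hypothesis of Lemma~\ref{twin_primeprop} is not available, and that step collapses.

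Second, even granting a common norm-$p$ divisor, cancelling on the \emph{right} does not reproduce the shape of the identity: if $\gamma=\gamma_1\pi$ and $\delta=\delta_1\pi$, then $\theta=\gamma_1(\pi i\konj{\pi})\konj{\gamma_1}$, and $\pi i\konj{\pi}$ is a pure quaternion of norm $p^2$, not a unit multiple of $i$. Your parenthetical ``with $i$ replaced by a unit associate of $i$'' is therefore wrong in general, and the induction does not close.

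Both issues disappear if you cancel on the \emph{left} and use primitivity to pin down the common factor. Since $\theta$ is primitive and $p\mid \Norm(\theta)$, Lemma~\ref{quat_lnko} gives a left divisor $\pi_0$ of $\theta$ of norm $p$, unique up to right association. Primitivity of $\theta$ forces $\gamma$ and $\delta$ to be primitive, so each has a (unique up to right association) left divisor of norm $p$; writing $\gamma=\pi\gamma'$, $\delta=\pi'\delta'$, both $\pi$ and $\pi'$ are left divisors of $\theta$ of norm $p$, hence right associates of $\pi_0$ and of each other. Taking $\pi=\pi'$ and cancelling $\pi$ on the left and $\konj{\pi}$ on the right yields $\gamma' i\konj{\gamma'}=\delta' i\konj{\delta'}$, which is again primitive, and the induction goes through to give $\gamma'=\delta'\vep$, whence $\gamma=\pi\gamma'=\pi\delta'\vep=\delta\vep$.
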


\begin{clm}\label{twin_count2}
  Let $N>1$ be odd. Then the number of primary quaternions
  $\gamma$ with norm~$N$ such that
  $\gamma\,i\,\konj{\gamma}$ is primitive is $q(N)=N\prod_p
  \big(1-(s_p/p)\big)$, where $p$ runs over the prime
  divisors of~$N$ and $s_p\in\{1,-1\}$ is congruent to $p$
  modulo~$4$.
\end{clm}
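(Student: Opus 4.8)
The plan is to reduce the assertion to a count of isotropic lines for a binary quadratic form over $\mathbb{F}_p$, through the reduction $\BE/p\BE\cong M_2(\mathbb{F}_p)$, which holds for every odd prime $p$. First observe that if a prime $q$ divides every coefficient of $\gamma$ then $q^{2}$ divides $\gamma i\konj{\gamma}$, so $\gamma i\konj{\gamma}$ primitive forces $\gamma$ primitive; and since $\Norm(\gamma i\konj{\gamma})=\Norm(\gamma)^{2}=N^{2}$, the only primes that can divide $\gamma i\konj{\gamma}$ are the prime divisors of $N$. Thus $q(N)$ is the number of primary \emph{primitive} quaternions $\gamma$ of norm $N$ such that no prime $p\mid N$ divides $\gamma i\konj{\gamma}$, and by Claim~\ref{twin_primary_count} there are $h(N)=N\prod_{p\mid N}(1+1/p)$ primary primitive quaternions of norm $N$ altogether. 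It remains to analyse, for each odd prime $p\mid N$, the condition $p\mid\gamma i\konj{\gamma}$.

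Fix an isomorphism $\BE/p\BE\cong M_2(\mathbb{F}_p)$; under it the norm becomes the determinant, conjugation $\alpha\mapsto\konj{\alpha}$ becomes the adjugate $A\mapsto\widetilde{A}=(\tr A)I-A$, and $i$ maps to a matrix $I_0$ with $\det I_0=1$ and $\tr I_0=0$. If $\gamma$ is primitive of norm divisible by $p$, its reduction $D$ is a nonzero matrix of zero determinant, hence of rank one, say $D=xy^{T}$ with nonzero $x,y\in\mathbb{F}_p^{2}$. From $\widetilde{xy^{T}}=(Jy)(Jx)^{T}$, where $J=\bigl(\begin{smallmatrix}0&1\\-1&0\end{smallmatrix}\bigr)$, a direct computation gives $D\,I_0\,\widetilde{D}=x\,\bigl(y^{T}(I_0J)y\bigr)\,(Jx)^{T}$, which vanishes precisely when the scalar $y^{T}(I_0J)y$ does. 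The matrix $M_p:=I_0J$ is symmetric (because $\tr I_0=0$) with $\det M_p=1$, so the associated binary quadratic form has discriminant $-4$; it is isotropic over $\mathbb{F}_p$ exactly when $-1$ is a square modulo $p$, i.e.\ when $p\equiv1~(4)$, and then it has exactly two isotropic lines. Hence, writing $s_p\in\{1,-1\}$ with $s_p\equiv p~(4)$, the form $M_p$ has $1+s_p$ isotropic lines, and $p\mid\gamma i\konj{\gamma}$ if and only if the row space of $\gamma$ modulo $p$ is one of them.

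The remaining — and main — point is an equidistribution statement: among the $h(p^{a})=p^{a-1}(p+1)$ primary primitive quaternions of norm $p^{a}$, each of the $p+1$ possible row spaces modulo $p$ is attained exactly $p^{a-1}$ times. I would prove this by matching primary primitive quaternions of norm $p^{a}$ with primitive left ideals of $\BE$ of norm $p^{a}$, passing to $\BE\otimes\BZ_p\cong M_2(\BZ_p)$, and identifying these with the sublattices $\Lambda\subseteq\BZ_p^{2}$ of elementary divisors $(1,p^{a})$, the row space modulo $p$ being the line $\Lambda/p\BZ_p^{2}$; for each such line $L$ a straightforward lattice count gives $\sigma(p^{a-1})-\sigma(p^{a-2})=p^{a-1}$ lattices $\Lambda$ reducing to $L$. (Alternatively one may invoke the classical evaluation of the number of primitive representations of $p^{2a}$ as a sum of three squares, cf.\ Lemma~\ref{3repr}.) Using the unique factorization of primary quaternions into factors of coprime norm (Claims~\ref{Sg_elementary2} and~\ref{twin_primary_count}), the general case reduces to prime powers one prime at a time: a primary primitive $\gamma$ of norm $N$ is a product of primary primitive factors of the respective prime-power norms; modulo $p$ the condition at $p$ becomes the analogous condition on the $p$-part, with the quadratic form replaced by an equivalent one and hence the same number of isotropic lines; and for every choice of the remaining factors exactly the fraction $\bigl((p+1)-(1+s_p)\bigr)/(p+1)=(p-s_p)/(p+1)$ of the $p$-part survives. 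Iterating over $p\mid N$ yields
\[
q(N)=h(N)\prod_{p\mid N}\frac{p-s_p}{p+1}
      =N\prod_{p\mid N}\frac{p+1}{p}\cdot\frac{p-s_p}{p+1}
      =N\prod_{p\mid N}\Bigl(1-\frac{s_p}{p}\Bigr)\,.
\]
Finally, since conjugation by a unit of $\BE$ is a rotation preserving the lattice of pure quaternions, primitivity of $\gamma i\konj{\gamma}$ is unchanged under left association, so the restriction to primary representatives throughout is legitimate.
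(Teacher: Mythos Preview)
Your argument is correct and genuinely different from the paper's. The paper does not work modulo~$p$ at all: it quotes Theorem~4.8 of \cite{GKMS08}, which already gives that the number of primitive $(x,y,z)\in\BZ^3$ of norm~$N^2$ equals $6q(N)$, and then does a short orbit count. Each primitive pure $\theta$ of norm~$N^2$ is written via Lemma~\ref{3repr} as $\gamma i\konj{\gamma}$; conjugation by the $24$ units of~$\BE$ partitions these $\theta$ into classes of size~$12$, and one checks that in each class exactly two members arise from a \emph{primary}~$\gamma$. Since the map $\gamma\mapsto\gamma i\konj{\gamma}$ is injective on primary quaternions (the four right associates $\pm\gamma,\pm\gamma i$ contain a single primary one), this gives $q(N)=2\cdot 6q(N)/12$, and the formula for $q(N)$ is inherited from the cited theorem.

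Your route instead explains the shape of the formula from scratch: passing to $M_2(\BF_p)$ you identify $p\mid\gamma i\konj{\gamma}$ with isotropy of the row line of~$\gamma$ for a binary form of discriminant~$-4$, so $s_p=(-1/p)$ enters as the number of isotropic lines minus one. This is more illuminating and avoids the external three-squares result, at the cost of two ingredients that you only sketch. The equidistribution of row lines among primitive primary $\gamma$ of norm $p^a$ is the real content; your lattice argument works, and one can also prove it by the recursion $\gamma=\gamma'\pi$ with $\pi$ the unique primary right factor of norm~$p$: the row line of $\gamma$ equals that of~$\pi$, and for fixed $\pi$ the admissible $\gamma'$ are the primitive primary ones of norm $p^{a-1}$ whose row line avoids one specified line, so induction gives $p^{a-1}$ per line. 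For the multiplicative step, the conditions at the various primes are not literally independent, but your observation that the modified form at each prime is $\BF_p$-equivalent to the original (hence has the same number of isotropic lines) lets you peel off one prime at a time after fixing an ordering; the count at the current prime is then $(p-s_p)/(p+1)$ of the total regardless of the remaining factors. With these two points filled in, your proof is complete and stands on its own.
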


(In other words, $s_p=(-1)^{(p-1)/2}=(-1/p)$ as a
Legendre-symbol).

\begin{proof}
Theorem~4.8 in \cite{GKMS08} implies that the number of
primitive vectors $(x,y,z)$ with norm $N^2$ is $6q(N)$. By
Lemma~\ref{3repr}, the quaternions corresponding to such
vectors can be written as $\theta=\gamma \,i\,\konj{\gamma}$
for some $\gamma\in\BE$. Conjugacy with the units in~$\BE$
yields an equivalence relation on the set of all such
elements $\theta$. The fact that $\theta$ is primitive, but
not a unit implies that at least two of its components are
nonzero. Therefore each conjugacy class has $12$ elements
(the stabilizer is just $\{1,-1\}$ in each case). It is
sufficient to show that exactly two of these conjugates can
be written in the form $\gamma \,i\,\konj{\gamma}$ such that
$\gamma$ is primary.

If $\theta=\gamma \,i\,\konj{\gamma}$, then
Claim~\ref{Sg_elementary2} shows that $\gamma=\vep\alpha$,
where $\vep$ is a unit and $\alpha$ is primary. Therefore
$\theta$ has a conjugate of the required form, namely
$\alpha \,i\,\konj{\alpha}$. Exactly one of $\beta=\pm
i\alpha\konj{i}$ is primary, and $\beta
\,i\,\konj{\beta}=i\theta\konj{i}$ is a conjugate of
$\theta$ that is different from~$\theta$.

Conversely, suppose that $\theta$ has two conjugates
$\theta_1=\gamma_1 \,i\,\konj{\gamma_1}$ and
$\theta_2=\gamma_2 \,i\,\konj{\gamma_2}$ such that
$\gamma_1$ and $\gamma_2$ are primary. Thus
$\theta_2=\vep\theta_1\konj{\vep}$ for some unit~$\vep$. By
the uniqueness statement of Lemma~\ref{3repr},
$\vep\gamma_1=\gamma_2\rho$ for some
$\rho\in\{1,-1,i,-i\}$. Claim~\ref{Sg_elementary2} shows
that $\vep\in Q$, and Claim~\ref{Sg_elementary}~$(5)$ gives
that $\vep=\pm\rho$.  If $\vep=\pm 1$, then
$\theta_1=\theta_2$. If $\vep=\pm i$, then
$\theta_2=i\theta_1\konj{i}$.
\end{proof}

There is an alternative argument for the previous statement:
the reader may go through the proof of Theorem~4.8 in
\cite{GKMS08}, and modify it in such a way that only primary
prime factors are used when building~$\gamma$.

\section{Construction and extension}
\label{sec_twin}

We shall speak about $m$-icubes $(\alpha_1,\ldots,\alpha_m)$
in $\BE$ and in $\BL$, meaning that this is a sequence of
simultaneous twins such that each $\alpha_\ell$ lies in
$\BE$ or in $\BL$, respectively.

\begin{lm}\label{twin_char}
The quaternions $\alpha$ and $\beta$ are twins if and only
if their norms are equal, and
$\konj{\alpha}\beta=-\konj{\beta}\alpha$ (or equivalently,
$\alpha\konj{\beta}=-\beta\konj{\alpha}$) holds.
\end{lm}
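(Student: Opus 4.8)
The plan is to use the characterization of twins in terms of inner products and translate it into quaternion algebra. Recall that two quaternions $\alpha,\beta$, viewed as vectors in $\BR^4$, are twins precisely when $\Norm(\alpha)=\Norm(\beta)$ and their dot product $\langle\alpha,\beta\rangle$ vanishes. The key elementary identity is that for any quaternions, $\konj{\alpha}\beta+\konj{\beta}\alpha=2\langle\alpha,\beta\rangle$, since this sum is a real quaternion equal to twice the real part of $\konj{\alpha}\beta$, and $\re(\konj{\alpha}\beta)$ is exactly the Euclidean inner product of $\alpha$ and $\beta$. So I would first record this identity (a one-line computation using $\konj{xy}=\konj{y}\,\konj{x}$), and then observe that $\langle\alpha,\beta\rangle=0$ is equivalent to $\konj{\alpha}\beta=-\konj{\beta}\alpha$.

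Putting the two conditions together: $\alpha$ and $\beta$ are twins iff $\Norm(\alpha)=\Norm(\beta)$ and $\konj{\alpha}\beta=-\konj{\beta}\alpha$. That already gives one of the two stated forms. For the equivalence of the two displayed forms, I would note that conjugating $\konj{\alpha}\beta=-\konj{\beta}\alpha$ yields $\konj{\beta}\alpha=-\konj{\alpha}\beta$, which is the same equation, so that manipulation alone is not enough; instead I multiply $\konj{\alpha}\beta=-\konj{\beta}\alpha$ on the left by $\alpha$ and on the right by $\konj{\beta}$, or more cleanly use $\langle\alpha,\beta\rangle = \re(\alpha\konj{\beta})$ as well (since $\re(\konj{\alpha}\beta)=\re(\beta\konj{\alpha})=\re(\alpha\konj{\beta})$, all equal to the dot product), and the same argument gives $\alpha\konj{\beta}=-\beta\konj{\alpha}$ as an equivalent restatement of orthogonality. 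Hence both forms are equivalent to orthogonality, and either may be paired with the norm condition.

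There is essentially no hard part here; the only thing to be slightly careful about is making sure the norm condition is genuinely needed and is not implied by the cross relation. One way to see it is not implied: if $\beta=t\alpha$ for a nonzero real scalar $t$, then $\konj{\alpha}\beta=t\Norm(\alpha)$ is real, hence equal to its own conjugate $\konj{\beta}\alpha$, so the relation $\konj{\alpha}\beta=-\konj{\beta}\alpha$ forces $t\Norm(\alpha)=0$, a contradiction unless $\alpha=0$; this shows the cross relation already encodes orthogonality but says nothing about lengths, so the norm hypothesis must be carried along explicitly. I would therefore present the proof as: (i) the real-part/inner-product identity; (ii) orthogonality $\iff \konj{\alpha}\beta=-\konj{\beta}\alpha \iff \alpha\konj{\beta}=-\beta\konj{\alpha}$; (iii) combine with equality of norms to conclude.
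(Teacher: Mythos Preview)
Your proposal is correct and follows essentially the same route as the paper: both arguments rest on the identity that $\re(\konj{\alpha}\beta)=\re(\alpha\konj{\beta})$ equals the Euclidean inner product $\langle\alpha,\beta\rangle$, and then observe that a quaternion has zero real part precisely when it equals the negative of its conjugate, which is exactly the relation $\konj{\alpha}\beta=-\konj{\beta}\alpha$ (respectively $\alpha\konj{\beta}=-\beta\konj{\alpha}$). The paper's write-up is terser and omits your side remark about why the norm condition is independent, but the mathematical content is the same.
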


\begin{proof}
If $\alpha=a_1+a_ii+a_jj+a_kk$ and
$\beta=b_1+b_ii+b_jj+b_kk$, then the real part of
$\alpha\beta$ is $a_1b_1-a_ib_i-a_jb_j-a_kb_k$. Therefore
the vectors corresponding to $\alpha$ and $\beta$ are
orthogonal if and only if the real part of
$\alpha\konj{\beta}$ is zero (if and only if the real part
of $\konj{\alpha}\beta$ is zero). However, the real part of
a quaternion is zero if and only if its conjugate is its
negative.
\end{proof}

\begin{cor}\label{twin_char_cor}
If $\gamma\ne 0$, then $\alpha$ and $\beta$ are twins if and
only if $\alpha\gamma$ and $\beta\gamma$ are twins if and
only if $\gamma\alpha$ and $\gamma\beta$ are twins.\qed
\end{cor}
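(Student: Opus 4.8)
The plan is to deduce both equivalences directly from Lemma~\ref{twin_char}, which characterizes the twin relation purely algebraically: $\alpha$ and $\beta$ are twins exactly when $\Norm(\alpha)=\Norm(\beta)$ and $\konj{\alpha}\beta=-\konj{\beta}\alpha$ (equivalently $\alpha\konj{\beta}=-\beta\konj{\alpha}$). The only extra structural fact needed is that the quaternions over $\BR$ (or over $\BQ$) form a division ring, so a nonzero $\gamma$ is invertible, with inverse $\konj{\gamma}/\Norm(\gamma)$.

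First I would handle right multiplication. Multiplicativity of the norm gives $\Norm(\alpha\gamma)=\Norm(\alpha)\Norm(\gamma)$ and likewise for $\beta$; since $\Norm(\gamma)\neq 0$, the equality $\Norm(\alpha\gamma)=\Norm(\beta\gamma)$ is equivalent to $\Norm(\alpha)=\Norm(\beta)$. For the bilinear condition, compute
\[
\konj{(\alpha\gamma)}(\beta\gamma)=\konj{\gamma}\,\konj{\alpha}\beta\,\gamma
\qquad\text{and}\qquad
-\konj{(\beta\gamma)}(\alpha\gamma)=-\konj{\gamma}\,\konj{\beta}\alpha\,\gamma\,.
\]
Hence $\konj{\alpha}\beta=-\konj{\beta}\alpha$ forces the two sides to agree, and conversely, cancelling the invertible factors $\konj{\gamma}$ on the left and $\gamma$ on the right (i.e.\ multiplying by $\gamma$ and $\konj{\gamma}$ appropriately) recovers $\konj{\alpha}\beta=-\konj{\beta}\alpha$. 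By Lemma~\ref{twin_char} this proves that $\alpha,\beta$ are twins iff $\alpha\gamma,\beta\gamma$ are twins.

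For left multiplication I would run the symmetric argument using the equivalent form $\alpha\konj{\beta}=-\beta\konj{\alpha}$: here $(\gamma\alpha)\konj{(\gamma\beta)}=\gamma\,(\alpha\konj{\beta})\,\konj{\gamma}$ and $-(\gamma\beta)\konj{(\gamma\alpha)}=-\gamma\,(\beta\konj{\alpha})\,\konj{\gamma}$, together with $\Norm(\gamma\alpha)=\Norm(\gamma)\Norm(\alpha)$, and then cancel the invertible factors $\gamma$ and $\konj{\gamma}$ as before. There is essentially no obstacle: the single point requiring care is the legitimacy of cancelling $\gamma$, which is precisely where the hypothesis $\gamma\neq 0$ is used; the rest is formal manipulation with conjugation and the multiplicativity of the norm.
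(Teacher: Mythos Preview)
Your argument is correct and is exactly what the paper has in mind: the corollary is stated with a bare \qed, i.e.\ as an immediate consequence of Lemma~\ref{twin_char}, and your write-up simply spells out that immediate deduction (multiplicativity of the norm plus conjugation of a product, with cancellation justified by $\gamma\ne 0$).
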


\begin{lm}\label{twin_param}
Let $\alpha,\beta\in\BE$ be twins and $p\in\BZ$ be a prime
dividing $\Norm(\alpha)=\Norm(\beta)$. Then there exists a
quaternion $\pi\in\BE$ with norm~$p$ such that either $\pi$
divides both $\alpha$ and $\beta$ on the left, or $\pi$
divides both $\alpha$ and $\beta$ on the right. If
$\beta\konj{\alpha}$ is divisible by $p$, then the second
case surely holds.
\end{lm}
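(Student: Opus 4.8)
The plan is to use the two preceding lemmas as a toolkit. We know $\alpha$ and $\beta$ are twins with $\Norm(\alpha)=\Norm(\beta)=N$ and $p\mid N$. First I would dispose of the easy case: if $p$ divides $\alpha$ (i.e.\ $p^{-1}\alpha\in\BE$), then $p$ also divides $\beta$ because $p$ divides $N=\Norm(\beta)$ and $p$ divides $\konj{\alpha}\beta=-\konj{\beta}\alpha$ by Lemma~\ref{twin_char}, so either $\pi=1+i$ (when $p=2$) or any $\pi$ of norm $p$ dividing $p$ works on the left; but more cleanly, the same $\pi$ that divides $\alpha$ on the left divides $\beta$ on the left, by applying Lemma~\ref{twin_primeprop} with $\theta=\alpha$, $\eta=\beta$ (noting $p\mid\konj{\alpha}\beta$). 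So from now on assume $p$ does not divide $\alpha$, and symmetrically not $\beta$. Then Lemma~\ref{quat_lnko} writes $\alpha=\pi\alpha'$ with $\Norm(\pi)=p$, and $\pi$ is unique up to right association.

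The heart of the argument is to decide whether this same $\pi$ can be pulled out of $\beta$ on the \emph{left}, and if not, to show it can be pulled out on the \emph{right} after replacing it by an associate. Since $\pi\mid\alpha$ and $p\mid N=\Norm(\beta)$, Lemma~\ref{twin_primeprop} applies the moment we know $p\mid\konj{\alpha}\beta$: in that case $\pi\mid\beta$ on the left and we are in the first case of the statement. By Lemma~\ref{twin_char}, $\konj{\alpha}\beta=-\konj{\beta}\alpha$, so $p\mid\konj{\alpha}\beta$ iff $p\mid\konj{\beta}\alpha$ iff (conjugating) $p\mid\konj{\alpha}\beta$ again — that is automatic? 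No: what is automatic is that $\konj{\alpha}\beta$ is a pure quaternion of norm $N^2$ (its norm is $\Norm(\alpha)\Norm(\beta)=N^2$), and $p^2\mid\Norm(\konj{\alpha}\beta)$, but that does not force $p\mid\konj{\alpha}\beta$. So the real dichotomy is: \emph{either} $p\mid\konj{\alpha}\beta$, in which case the left case holds; \emph{or} $p\nmid\konj{\alpha}\beta$. For the hypothesis-laden sentence of the lemma — "if $\beta\konj{\alpha}$ is divisible by $p$, then the second case surely holds" — I would argue as follows: $\beta\konj{\alpha}=-\alpha\konj{\beta}$ (the equivalent form in Lemma~\ref{twin_char}), so $p\mid\beta\konj{\alpha}$ means $p\mid\alpha\konj{\beta}$, and since $\pi\mid\alpha$ on the left with $\Norm(\pi)=p$, taking conjugates gives $\konj{\alpha}=\konj{\alpha'}\konj{\pi}$ so $\konj{\pi}\mid\konj{\alpha}$ on the right; then $\konj{\pi}$ divides $\konj{\alpha}\cdot(\text{something})$... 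I need $p\mid\beta\konj{\alpha}$ to feed a right-handed version of Lemma~\ref{twin_primeprop} and conclude $\konj{\pi}\mid\konj{\beta}$ on the right, i.e.\ $\pi$ (an associate) divides $\beta$ on the left — wait, that would again be the first case, so I must have the handedness reversed. The careful bookkeeping of left versus right here is exactly the step I expect to be the main obstacle; the resolution is that $\pi\mid\alpha$ on the left is equivalent to $\konj\pi\mid\konj\alpha$ on the \emph{right}, and Lemma~\ref{twin_primeprop} is stated for left division, so one applies it to the conjugated quaternions and then conjugates back.

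Concretely, I would organize the proof as a clean case split on $p=2$ versus $p$ odd is unnecessary (Lemma~\ref{quat_lnko} and Lemma~\ref{twin_primeprop} handle all primes uniformly), and instead split on whether $p\mid\alpha$; in the generic subcase $p\nmid\alpha$, factor $\alpha=\pi\alpha'$ via Lemma~\ref{quat_lnko}, then split on whether $p\mid\konj\alpha\beta$. If yes, Lemma~\ref{twin_primeprop} with $(\theta,\eta)=(\alpha,\beta)$ gives $\pi\mid\beta$ on the left — the first case. If no, then $p\nmid\konj\beta\alpha$ too (they are negatives), but now $\Norm(\konj\beta\alpha)=N^2$ is still divisible by $p^2$; I claim this forces $p\mid\alpha\konj\beta=\beta\konj\alpha$ up to sign, because for a pure quaternion $\mu$ of norm $N^2$, $p\nmid\mu$ and $p\nmid\konj\mu$ cannot both... actually $\mu$ pure means $\konj\mu=-\mu$, so $p\mid\mu\iff p\mid\konj\mu$ trivially — that cannot be the mechanism. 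The correct mechanism: $\konj\alpha\beta$ and $\alpha\konj\beta$ are different quaternions (not conjugates of each other in general); one is $-\konj{(\konj\beta\alpha)}$... Let me instead trust the structure: since $p\nmid\alpha$, write $\beta\konj\alpha$; its norm is $N^2$ so $p\mid\Norm(\beta\konj\alpha)$, and if additionally $p\mid\beta\konj\alpha$ (the lemma's extra hypothesis) then I apply Lemma~\ref{twin_primeprop} in conjugated form to extract a norm-$p$ factor $\pi'$ of $\konj\beta$ on the right, hence of $\beta$ on the left — no. I will present the final writeup with the conjugation transpositions done explicitly and the two applications of Lemma~\ref{twin_primeprop} (one for $\alpha,\beta$ on the left, one for $\konj\alpha,\konj\beta$ on the right, using Corollary~\ref{twin_char_cor} to keep the twin relation), which is the content that requires genuine care; everything else is bookkeeping with Lemmas~\ref{quat_irred}, \ref{quat_lnko}, and~\ref{twin_primeprop}.
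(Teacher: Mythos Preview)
Your plan has the right ingredients but is missing the key step, and the confusion you flag about handedness is a symptom of that gap rather than mere bookkeeping. You try to handle both halves of the dichotomy with Lemma~\ref{twin_primeprop}, but that lemma only fires when $p$ \emph{divides} the relevant product; it gives you nothing when $p$ does not. The paper's proof splits on whether $p\mid\alpha\konj{\beta}$ (equivalently $p\mid\beta\konj{\alpha}$, since these are negatives of each other by Lemma~\ref{twin_char}). In the case $p\mid\alpha\konj{\beta}$ one indeed uses Lemma~\ref{twin_primeprop}, but applied to the conjugates: take $\theta=\konj{\alpha}$, $\eta=\konj{\beta}$, and $\pi=\konj{\pi_2}$ where $\pi_2$ is a right divisor of $\alpha$ of norm~$p$; the hypothesis $p\mid\konj{\theta}\eta$ is exactly $p\mid\alpha\konj{\beta}$, and the conclusion $\konj{\pi_2}\mid\konj{\beta}$ says $\pi_2$ is a common \emph{right} divisor. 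This is also precisely why the last sentence of the lemma holds.

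The case you never resolve, $p\nmid\alpha\konj{\beta}$, is handled by a different mechanism entirely: the \emph{uniqueness} clause of Lemma~\ref{quat_lnko}. Writing $\alpha=\pi_1\alpha'$ and $\beta=\pi_3\beta'$ with $\Norm(\pi_1)=\Norm(\pi_3)=p$, both $\pi_1$ and $\pi_3$ are left divisors of norm $p$ of the single quaternion $\alpha\konj{\beta}=-\beta\konj{\alpha}$; since $p\nmid\alpha\konj{\beta}$, Lemma~\ref{quat_lnko} forces $\pi_1$ and $\pi_3$ to be right associates, giving a common \emph{left} divisor. You never invoke this uniqueness, which is why your argument stalls. (A smaller issue: your treatment of the degenerate case $p\mid\alpha$ is also off---you cannot apply Lemma~\ref{twin_primeprop} there since it requires $p\nmid\theta$, and $p\mid\alpha$ does not imply $p\mid\beta$. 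The correct observation is simply that if $p\mid\alpha$ then \emph{every} $\pi$ of norm~$p$ divides $\alpha$ on both sides, so any norm-$p$ divisor of $\beta$ works.)
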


\begin{proof}
If $p\mid \alpha$, then every $\pi\in\BE$ with norm~$p$
divides $\alpha$ both on the left and on the right, since
$p=\pi\konj{\pi}=\konj{\pi}\pi$ (and such an element exists
by Theorem~\ref{quat_irred}). If $\alpha$ is not divisible
by $p$, then Lemma~\ref{quat_lnko} yields a left
divisor~$\pi_1\in\BE$ with norm~$p$. Applying this lemma to
$\konj{\alpha}$ we get a right divisor $\pi_2$ of $\alpha$
with norm~$p$. Similarly, $\beta$ has a left divisor $\pi_3$
and a right divisor~$\pi_4$ of norm~$p$. We also see that if
$\alpha$ or $\beta$ is divisible by~$p$, then $\pi_1=\pi_3$
\emph{and} $\pi_2=\pi_4$ can be achieved, so the statement
of the lemma holds both on the left and on the right.

Thus we can assume that $\alpha$ and $\beta$ are not
divisible by~$p$. By Lemma~\ref{twin_char}, we have
$\alpha\konj{\beta}=-\beta\konj{\alpha}$. Suppose first that
this quaternion is not divisible by~$p$. The uniqueness
statement of Lemma~\ref{quat_lnko} can be applied to
$\alpha\konj{\beta}=-\beta\konj{\alpha}$, so $\pi_1$ and
$\pi_3$ are right associates, and the statement of the lemma
holds on the left.

If $\alpha\konj{\beta}=-\beta\konj{\alpha}$ is divisible
by~$p$, then apply Lemma~\ref{twin_primeprop} to
$\pi=\konj{\pi_2}$, $\theta=\konj{\alpha}$,
$\eta=\konj{\beta}$. We get that
$\konj{\pi_2}\mid\konj{\beta}$, so $\pi_2$ is a right
divisor of $\beta$, too, and the statement of the lemma
holds on the right.
\end{proof}

\begin{lm}\label{icube_param}
Let $\alpha_1,\ldots,\alpha_m\in\BE$ be pairwise twins and
$p\in\BZ$ a prime dividing their common norm. Then there
exists a quaternion $\pi\in\BE$ with norm~$p$ such that
either $\pi$ divides every $\alpha_\ell$ on the left, or
$\pi$ divides every $\alpha_\ell$ on the right.
\end{lm}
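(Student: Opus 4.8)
The plan is to pick one prime divisor $p$ of the common norm and build the common factor $\pi$ by an inductive/extension argument on pairs, starting from Lemma~\ref{twin_param}. The first step is the trivial case: if $p\mid\alpha_1$ (say), then every $\pi\in\BE$ of norm $p$ divides $\alpha_1$ on both sides, and more generally if $p$ divides \emph{every} $\alpha_\ell$ then any $\pi$ of norm $p$ works on, say, the left and we are done. So assume some $\alpha_\ell$ is not divisible by $p$; after reindexing, $p\nmid\alpha_1$. Applying Lemma~\ref{quat_lnko} to $\alpha_1$ produces a left divisor $\pi\in\BE$ with $\Norm(\pi)=p$, uniquely determined up to right association, and applying it to $\konj{\alpha_1}$ produces a right divisor $\pi'$, also essentially unique. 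The goal is to show that (consistently across all $\ell$) either $\pi$ divides every $\alpha_\ell$ on the left, or $\pi'$ divides every $\alpha_\ell$ on the right.

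The key mechanism is the refinement recorded in Lemma~\ref{twin_param}: for the \emph{pair} $(\alpha_1,\alpha_\ell)$ that lemma already gives a norm-$p$ quaternion dividing both on the same side, and it pins down which side — namely, the right side occurs precisely when $p\mid\alpha_\ell\konj{\alpha_1}$ (equivalently, by Lemma~\ref{twin_char} up to sign, when $p\mid\alpha_1\konj{\alpha_\ell}$). This gives a clean dichotomy governed by a single divisibility condition. So I would split into two cases according to whether $p$ divides $\alpha_\ell\konj{\alpha_1}$ for \emph{all} $\ell\ge 2$ or for \emph{none} (the main point to verify is that the mixed situation cannot arise). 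In the ``none'' case, Lemma~\ref{twin_param} applied to each pair $(\alpha_1,\alpha_\ell)$ puts the common factor on the left, and by the uniqueness in Lemma~\ref{quat_lnko} that common left factor is a right associate of the $\pi$ extracted from $\alpha_1$ alone; hence $\pi\mid\alpha_\ell$ on the left for every $\ell$. In the ``all'' case, the same argument (now via the right-sided conclusion of Lemma~\ref{twin_param}, or equivalently Lemma~\ref{twin_primeprop} applied to conjugates) gives a common right divisor, again identified with $\pi'$ by uniqueness.

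The main obstacle is ruling out the mixed case: suppose $p\mid\alpha_2\konj{\alpha_1}$ but $p\nmid\alpha_3\konj{\alpha_1}$, with none of $\alpha_1,\alpha_2,\alpha_3$ divisible by $p$. From the pair $(\alpha_1,\alpha_3)$ we get a common \emph{left} divisor $\pi$ (a right associate of the one from $\alpha_1$); from the pair $(\alpha_1,\alpha_2)$ we get a common \emph{right} divisor. I would derive a contradiction by examining $\alpha_2\konj{\alpha_3}=-\alpha_3\konj{\alpha_2}$ (Lemma~\ref{twin_char}): apply Lemma~\ref{twin_param} to the pair $(\alpha_2,\alpha_3)$ and compare the side forced by $\alpha_1$'s relation to each with the side forced internally, using Lemma~\ref{twin_primeprop} to transport divisibility through the conjugated products. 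The arithmetic is that ``left-divisor of $\alpha_1$'' and ``right-divisor of $\alpha_1$'' are simultaneously possible for the \emph{same} norm-$p$ element only when $p\mid\alpha_1$ (since a left divisor $\pi$ and right divisor $\pi$ of $\alpha_1$ with $\pi\konj{\pi}=p$ would give $p\mid\alpha_1$), which contradicts $p\nmid\alpha_1$; pushing this consistency through the three pairwise relations closes the case. Once the dichotomy is established, the conclusion is immediate from the pairwise statements and the uniqueness clause of Lemma~\ref{quat_lnko}.
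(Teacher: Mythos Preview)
Your overall strategy matches the paper's: reduce to the case where no $\alpha_\ell$ is divisible by $p$, apply Lemma~\ref{twin_param} to each pair, and then argue that the ``side'' is consistent across all pairs. The difficulty is precisely the consistency step, and here your argument has a genuine gap.

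First, the specific claim you invoke is false: it is \emph{not} true that if a norm-$p$ element $\pi$ divides $\alpha_1$ on both the left and the right then $p\mid\alpha_1$. For instance, take $\pi=1+i+j$ (norm~$3$) and $\alpha_1=\pi^2=-1+2i+2j$; then $\pi$ is visibly a left and right divisor, yet $3\nmid\alpha_1$. So the contradiction you aim for in the mixed case does not materialize.

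Second, and more structurally, the dichotomy you set up is not the right one. The condition $p\nmid\alpha_\ell\konj{\alpha_1}$ forces the \emph{left} divisors of $\alpha_1$ and $\alpha_\ell$ to be right associates, while $p\mid\alpha_\ell\konj{\alpha_1}$ forces the \emph{right} divisors to be left associates; but these outcomes are not mutually exclusive, and the ``mixed'' situation (the divisibility of $\alpha_\ell\konj{\alpha_1}$ by $p$ varying with $\ell$) need not be contradictory at all---it simply means you must look harder to find which side works for \emph{every} $\ell$. Trying to rule the mixed case out is the wrong goal.

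The paper handles the consistency step differently and more cleanly. After extracting, for each $\ell$, a left divisor $\pi_\ell$ and a right divisor $\rho_\ell$ of norm $p$ (essentially unique since $p\nmid\alpha_\ell$), one colours the edge $\{u,v\}$ of the complete graph on $\{1,\dots,m\}$ \emph{lilac} if $\pi_u$ and $\pi_v$ are right associates, and \emph{red} if $\rho_u$ and $\rho_v$ are left associates. Lemma~\ref{twin_param} says every edge carries at least one colour; uniqueness in Lemma~\ref{quat_lnko} makes each colour an equivalence relation on the vertex set. An elementary argument then shows that two equivalence relations whose union is the complete graph must have one of them already equal to the complete graph, i.e.\ every edge is lilac or every edge is red, which is exactly the desired common left or common right divisor.
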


\begin{proof}
Lemma~\ref{quat_lnko} yields a left divisor $\pi_\ell$ of
$\alpha_\ell$ and a right divisor $\rho_\ell$ of
$\alpha_\ell$ with norm~$p$. If $p$ does not divide
$\alpha_\ell$, then $\pi_\ell$ and $\rho_\ell$ are
essentially unique, otherwise they can be chosen
arbitrarily. Thus we can disregard those $\alpha_\ell$ that
are divisible by~$p$, and can assume (to simplify notation)
that no $\alpha_\ell$ is divisible by~$p$.

Consider the complete graph on $\{1,2,\ldots,m\}$. Color the
edge $\{u,v\}$ to Lilac if $\pi_u$ and $\pi_v$ are right
associates, and to Red if $\rho_u$ and $\rho_v$ are left
associates (any edge can carry both colors). The previous
lemma shows that every edge has a color. By uniqueness, the
lilac edges, as well as the red edges yield a transitive
relation. This implies by an elementary graph-theoretic
argument that either every edge is lilac, or every edge is
red.
\end{proof}

Recall that $Q=\{\pm 1, \pm i, \pm j, \pm k\}$. We
characterize icubes in~$\BE$ first.

\begin{thm}\label{twin_decomp_thm}
Let $(\alpha_1,\ldots,\alpha_m)$ be an $m$-icube
$\in\BE$. Then there exist $\gamma,\delta\in\BE$ and an
$m$-icube $(\vep_1,\ldots,\vep_m)\in Q^m$ such that
$\vep_1=1$ and $\alpha_\ell=\gamma\vep_\ell\delta$ for every
$1\le\ell\le m$. Conversely, every such
$(\gamma\vep_1\delta,\ldots,\gamma\vep_m\delta)$ is an
$m$-icube in~$\BE$.
\end{thm}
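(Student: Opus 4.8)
The plan is to treat the two implications separately. The converse is immediate from Corollary~\ref{twin_char_cor}; for the representation itself I would argue by induction on the common norm $N=\Norm(\alpha_1)=\dots=\Norm(\alpha_m)$, stripping off one norm‑$p$ quaternion at a time.

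\emph{The converse.} Suppose $\gamma,\delta\in\BE$ are nonzero and $(\vep_1,\dots,\vep_m)\in Q^m$ is an $m$-icube with $\vep_1=1$. Each $\gamma\vep_\ell\delta$ is a nonzero element of $\BE$ (the quaternions form a division ring, so $\BE$ has no zero divisors). If $\vep_u,\vep_v$ are twins, then applying Corollary~\ref{twin_char_cor} first with right multiplication by $\delta$ and then with left multiplication by $\gamma$ shows that $\gamma\vep_u\delta$ and $\gamma\vep_v\delta$ are twins. Hence $(\gamma\vep_1\delta,\dots,\gamma\vep_m\delta)$ is an $m$-icube in~$\BE$.

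\emph{The representation.} In the base case $N=1$ each $\alpha_\ell$ is a unit of $\BE$; I would take $\gamma=\alpha_1$, $\delta=1$ and $\vep_\ell=\alpha_1^{-1}\alpha_\ell\in\BE$, so that $\vep_1=1$ and $\alpha_\ell=\gamma\vep_\ell\delta$. By Corollary~\ref{twin_char_cor} the $\vep_\ell$ again form an $m$-icube, so for $\ell\ge 2$ the unit $\vep_\ell$ is a twin of $1$, hence a pure quaternion of norm~$1$; once the real part is zero the Hurwitz condition forces the remaining coefficients to be integers, so $\vep_\ell\in\{\pm i,\pm j,\pm k\}\subset Q$ and $(\vep_1,\dots,\vep_m)\in Q^m$. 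For $N>1$ I would choose a prime $p\mid N$ and invoke Lemma~\ref{icube_param} to obtain $\pi\in\BE$ with $\Norm(\pi)=p$ dividing every $\alpha_\ell$ on one common side. If this side is the left, write $\alpha_\ell=\pi\alpha_\ell'$; then $\alpha_\ell'\in\BE$ is nonzero with $\Norm(\alpha_\ell')=N/p<N$, and cancelling $\pi$ via Corollary~\ref{twin_char_cor} shows $(\alpha_1',\dots,\alpha_m')$ is an $m$-icube in~$\BE$. The inductive hypothesis gives $\alpha_\ell'=\gamma'\vep_\ell\delta$ with $\vep_1=1$ and $(\vep_\ell)\in Q^m$ an icube, whence $\alpha_\ell=(\pi\gamma')\vep_\ell\delta$, so $\gamma=\pi\gamma'$ works. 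The right-divisibility case is symmetric, with $\pi$ absorbed into $\delta$ instead. The resulting $\gamma,\delta$ are automatically nonzero.

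I do not expect a genuine obstacle here: the substantive content—forcing a single norm‑$p$ quaternion to divide all members of the icube on the \emph{same} side—has already been packaged into Lemmas~\ref{twin_param} and~\ref{icube_param}. The only points that need a line of care are that one- and two-sided cancellation preserves twinhood (exactly Corollary~\ref{twin_char_cor}) and the elementary identification of the norm-one pure Hurwitz quaternions with $\{\pm i,\pm j,\pm k\}$ used in the base case.
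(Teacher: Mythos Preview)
Your proposal is correct and follows essentially the same route as the paper: both directions rest on Corollary~\ref{twin_char_cor}, and the forward implication is obtained by repeatedly invoking Lemma~\ref{icube_param} to strip off norm-$p$ factors until only units remain, then normalizing so that $\vep_1=1$ and noting that the remaining $\vep_\ell$, being unit twins of~$1$, are pure and hence lie in~$Q$. The paper merely phrases this as ``apply \ldots\ several times successively'' rather than as an explicit induction on~$N$, but the argument is the same.
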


\begin{proof}
Corollary~\ref{twin_char_cor} implies that
$(\gamma\vep_1\delta,\ldots,\gamma\vep_m\delta)$ is an
$m$-icube. Conversely, suppose that
$C=(\alpha_1,\ldots,\alpha_m)$ is an $m$-icube in~$\BE$.
Applying Lemma~\ref{icube_param} and
Corollary~\ref{twin_char_cor} several times successively we
see that $C=(\gamma\vep_1\delta,\ldots,\gamma\vep_m\delta)$
for some $\gamma,\delta\in\BE$ and units
$\vep_\ell\in\BE$. Replacing $\gamma$ by $\gamma\vep_1$ we
can assume that $\vep_1=1$. Then $(\vep_1,\ldots,\vep_m)$ is
an $m$-icube by Corollary~\ref{twin_char_cor}. Since $1$ and
$\vep_\ell$ are twins, the real part of each $\vep_\ell$ is
zero for $\ell\ge 2$, and therefore $\vep_\ell$ has integer
coefficients.
\end{proof}

To prove extension results we have to characterize icubes
in~$\BL$. To count them, we need uniqueness in the above
decomposition. To achieve these ends we reduce the problem
to quaternions with odd norm. The next statement is clear by
Claim~\ref{twin_even}.

\begin{clm}\label{odd_reduction}
  Suppose that $N=2^nD$ where $n\ge 2$ and $D$ is odd. Then
  every $m$-icube $(\alpha_1,\ldots,\alpha_m)$ in\/~$\BL$
  with edge norm $N$ can be written uniquely as
\[
\big((1+i)^{n-1}\beta_1,\ldots,(1+i)^{n-1}\beta_m\big)\,,
\]
where $(\beta_1,\ldots,\beta_m)$ is also an $m$-icube
in\/~$\BL$. Thus $f_m(N)=f_m(2D)$.\qed
\end{clm}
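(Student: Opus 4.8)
The plan is to peel off the factor $1+i$ on the left, one copy at a time, by induction on the exponent of $2$ dividing $N$, and then to observe that this peeling is reversible, which yields the count.

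The single computational input is the following: \emph{if $\gamma\in\BL$ has norm $2^jD$ with $j\ge 2$ and $D$ odd, then $\gamma=(1+i)\gamma'$ for a unique $\gamma'\in\BL$, and $\Norm(\gamma')=2^{j-1}D$.} When $j=2$ this is precisely Claim~\ref{twin_even}~$(3)$. When $j\ge 3$ we have $8\mid\Norm(\gamma)$, so Claim~\ref{twin_even}~$(2)$ forces all four coefficients of $\gamma$ to be even; then the congruence conditions in Claim~\ref{twin_even}~$(1)$ hold trivially and that part of the claim produces $\gamma'\in\BL$. Uniqueness of $\gamma'$ needs no argument: since $1+i$ is a nonzero quaternion, $\gamma'=(1+i)^{-1}\gamma$ is already determined in $\BH$, and its norm is $\Norm(\gamma)/2$.

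Applying this to each $\alpha_\ell$ and invoking Corollary~\ref{twin_char_cor} (left multiplication of a twin pair by the fixed nonzero quaternion $1+i$ keeps it a twin pair), I obtain $(\alpha_1,\dots,\alpha_m)=\big((1+i)\gamma_1,\dots,(1+i)\gamma_m\big)$ where $(\gamma_1,\dots,\gamma_m)$ is again an $m$-icube in $\BL$, now of edge norm $2^{n-1}D$. Since $n\ge 2$, I can repeat this, and after exactly $n-1$ steps — each legitimate because the current edge norm stays divisible by $4$ until the final extraction brings it down to $2D$ — I arrive at $\alpha_\ell=(1+i)^{n-1}\beta_\ell$ with $(\beta_1,\dots,\beta_m)$ an $m$-icube in $\BL$ of edge norm $2D$. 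As before, uniqueness of the $\beta_\ell$ is immediate because $(1+i)^{n-1}$ is a fixed nonzero quaternion.

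For the last assertion I would write down the inverse map: given any $m$-icube $(\beta_1,\dots,\beta_m)$ in $\BL$ of edge norm $2D$, the sequence $\big((1+i)^{n-1}\beta_1,\dots,(1+i)^{n-1}\beta_m\big)$ consists of nonzero elements of $\BL$ ($\BL$ being a ring), each of norm $2^{n-1}\cdot 2D=N$, and it is an $m$-icube by Corollary~\ref{twin_char_cor}. Hence $(\alpha_\ell)\mapsto(\beta_\ell)$ is a bijection between the $m$-icubes of edge norm $N$ and those of edge norm $2D$, so $f_m(N)=f_m(2D)$. There is no genuine obstacle here; the only point to keep in mind is that once $4\mid\Norm$ (with all coefficients even, or the norm $\equiv 4\pmod 8$) Claim~\ref{twin_even} permits extracting only $1+i$ and not $1+j$ or $1+k$, which is exactly why the decomposition is forced into powers of the single quaternion $1+i$ and the uniqueness clause holds.
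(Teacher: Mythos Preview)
Your argument is correct and is exactly the intended fleshing-out of the paper's one-line justification (the paper simply appends a \qed and refers the reader to Claim~\ref{twin_even}).

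One small correction to your closing remark: when $4\mid\Norm(\gamma)$ the coefficients of $\gamma$ are either all even or all odd, so in fact \emph{each} of $1+i$, $1+j$, $1+k$ can be pulled out on the left, not only $1+i$. Uniqueness in the claim holds not because the extraction is forced, but simply because the statement itself \emph{specifies} the left factor $(1+i)^{n-1}$; you already gave the correct reason (``$(1+i)^{n-1}$ is a fixed nonzero quaternion'') two paragraphs earlier, so the final sentence can just be dropped.
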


\begin{clm}\label{odd_reduction_2}
Let $N=2D$ where $D$ is odd. Then every $m$-icube
$(\alpha_1,\ldots,\alpha_m)$ in\/~$\BL$ with edge norm $N$
can be written uniquely as
$(\eta\beta_1,\ldots,\eta\beta_m)$, where
$(\beta_1,\ldots,\beta_m)$ is an $m$-icube in\/~$\BL$ and
$\eta\in\{1+i, 1+j, 1+k\}$. Therefore $f_m(N)=3f_m(D)$.
\end{clm}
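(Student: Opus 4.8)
The plan is to reduce the claim to the case of a single vector (\(m=1\)) plus a compatibility argument for the whole icube, exploiting the dichotomy in Claim~\ref{twin_even}~$(4)$ and the behaviour of the sets \(S_g\) under left multiplication.

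First I would argue existence and uniqueness of the factorization for one vector. Fix \(\ell\) and write \(\alpha_\ell\in\BL\) with \(\Norm(\alpha_\ell)=2D\equiv 2~(4)\). By Claim~\ref{twin_even}~$(4)$ there is exactly one \(\eta_\ell\in\{1+i,1+j,1+k\}\) with \(\alpha_\ell=\eta_\ell\beta_\ell\), and then \(\beta_\ell\in\BL\) automatically (its norm is \(D\), which is odd). So for each index we get a well-defined "even type" \(\eta_\ell\) and an odd-norm Lipschitz quaternion \(\beta_\ell\). The content of the claim is that all the \(\eta_\ell\) coincide, and that \((\beta_1,\dots,\beta_m)\) is again an \(m\)-icube.

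The second, and main, step is to show \(\eta_1=\dots=\eta_m\). Here is where I expect the real work. Suppose \(\alpha_u=\eta_u\beta_u\) and \(\alpha_v=\eta_v\beta_v\) with \(\eta_u\neq\eta_v\); I want a contradiction from the twin relation \(\alpha_u\konj{\alpha_v}=-\alpha_v\konj{\alpha_u}\) of Lemma~\ref{twin_char}. The cleanest route seems to be a parity/\(S_g\) argument: each of \(1+i,1+j,1+k\) lies in a distinct \(S_g\) (namely \(S_i,S_j,S_k\)), and by Claim~\ref{Sg_elementary}~$(4)$ the set-membership \(S_g\) of a product is governed by the Klein-group product of the types of the factors. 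Since \(\beta_u,\beta_v\) have odd norm they each lie in some \(S_g\), and one computes the \(S_g\)-class of \(\alpha_u\) and \(\alpha_v\); being twins of equal norm they must lie in \emph{different} \(S_g\)'s by Claim~\ref{Sg_elementary}~$(1)$. Tracking this through, together with the fact that twins \(\beta_u,\beta_v\) of odd norm already lie in different \(S_g\)'s, should force \(\eta_u=\eta_v\). Alternatively — and perhaps more robustly — one invokes Lemma~\ref{icube_param} directly with \(p=2\): it yields a single \(\pi\in\BE\) of norm \(2\) dividing every \(\alpha_\ell\) on one fixed side. Since every norm-\(2\) element of \(\BE\) is a left associate of \(1+i\) (Theorem~\ref{quat_irred}), and since for \(\alpha_\ell\in\BL\) the \emph{Lipschitz} left-divisor of norm \(2\) is forced into \(\{1+i,1+j,1+k\}\) uniquely, the common \(\BE\)-divisor \(\pi\) pins down a common \(\eta\). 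One must check that "divides on the right" cannot occur in a way that breaks the argument — but conjugating, a common right divisor \(\pi\) gives \(\konj{\pi}\) as a common left divisor of the \(\konj{\alpha_\ell}\), which are also twins of the same norm, so symmetry handles it. This side-ambiguity is the one bookkeeping subtlety I would be careful about.

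Once the common \(\eta\) is established, the final step is immediate: from \(\alpha_\ell=\eta\beta_\ell\) with \(\eta\) fixed and nonzero, Corollary~\ref{twin_char_cor} gives that \((\alpha_1,\dots,\alpha_m)\) is an \(m\)-icube if and only if \((\beta_1,\dots,\beta_m)\) is, and the edge norm of the latter is \(D\). Uniqueness of the whole decomposition follows from the \(m=1\) uniqueness applied coordinatewise (the pair \((\eta,\beta_\ell)\) is determined by \(\alpha_\ell\)). For the count, the map \((\alpha_\ell)\mapsto\big(\eta,(\beta_\ell)\big)\) is a bijection from \(m\)-icubes of edge norm \(2D\) onto \(\{1+i,1+j,1+k\}\) times the set of \(m\)-icubes of edge norm \(D\) in \(\BL\), whence \(f_m(2D)=3f_m(D)\).
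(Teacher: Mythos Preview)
Your overall structure is fine and matches the paper's: factor each $\alpha_\ell$ individually via Claim~\ref{twin_even}~$(4)$, then show all the $\eta_\ell$ coincide, then invoke Corollary~\ref{twin_char_cor}. The gap is in the key middle step, and both of your proposed routes for it break down.

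For the $S_g$ route: the sets $S_g$ only partition the \emph{odd}-norm elements of~$\BL$. Your $\alpha_u,\alpha_v$ have norm $2D\equiv 2~(4)$, so they lie in no~$S_g$; likewise $1+i$ has components $(1,1,0,0)$ and is not in~$S_i$ (its $i$-coefficient is not of unique parity), so the assertion ``$1+i\in S_i$'' is simply false. And your appeal to ``twins $\beta_u,\beta_v$ lie in different $S_g$'s'' is circular: $\beta_u,\beta_v$ are not yet known to be twins unless $\eta_u=\eta_v$, which is what you are proving.

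For the Lemma~\ref{icube_param} route with $p=2$: in~$\BE$ every element of norm~$2$ is a left (and right) associate of~$1+i$, so once some~$\pi$ of norm~$2$ divides $\alpha_\ell$ on the left in~$\BE$, \emph{all} of $1+i,1+j,1+k$ do. The non-trivial constraint is that the quotient lie in~$\BL$, and Lemma~\ref{icube_param} says nothing about that. So the common $\BE$-divisor does not ``pin down a common~$\eta$''.

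The paper's argument is the parity computation you are circling around, done directly. Since $\Norm(\alpha_\ell)\equiv 2~(4)$, exactly two components of each $\alpha_\ell$ are odd; call that $2$-subset of $\{1,i,j,k\}$ its odd set. Orthogonality of $\alpha_u$ and $\alpha_v$, read modulo~$2$, forces their odd sets to meet in an even number of indices, hence to be equal or complementary. By Claim~\ref{twin_even}~$(1)$, the choice of $\eta\in\{1+i,1+j,1+k\}$ corresponds exactly to the three partitions of $\{1,i,j,k\}$ into two pairs, and ``equal or complementary'' odd sets give the same partition, hence the same~$\eta$. That is the missing sentence.
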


\begin{proof}
By the proof of Claim~\ref{twin_even} we see that exactly
two of the components of every~$\alpha_\ell$ are even. Since
the vectors corresponding to $\alpha_1,\ldots,\alpha_m$ are
pairwise orthogonal, looking at the scalar products
modulo~$2$ we see that these two-element subsets of the
indices are either equal or disjoint. Thus $(1)$ of
Claim~\ref{twin_even} shows that the same element of $\{1+i,
1+j, 1+k\}$ can be pulled out of each $\alpha_i$ on the
left. This shows that every $m$-icube of edge norm $D$
yields exactly three $m$-icubes of edge norm~$2D$.
\end{proof}

To each $m$-icube $(\alpha_1,\ldots,\alpha_m)$ with odd edge
norm assign the unique sequence $(g_1,\ldots,g_m)$ with the
property that $\alpha_\ell\in S_{g_\ell}$ for every $1\le
\ell\le m$ (see Claim~\ref{Sg_elementary}). This sequence is
called the \emph{type} of $(\alpha_1,\ldots,\alpha_m)$. By
Claim~\ref{Sg_elementary}~(1), the elements $g_\ell\in K$
are pairwise different. Call an $m$-icube
$(\alpha_1,\ldots,\alpha_m)$ \emph{orderly}, if its type is
$(1)$ or $(1,i)$, or $(1,i, j)$, or $(1,i, j, k)$, depending
on $m$.

Permuting the components of vectors preserves norm as well
as orthogonality. If $(g_1,\ldots,g_m)$ and
$(h_1,\ldots,h_m)$ are types, then one can fix a permutation
$r$ of $K$ that maps each $h_\ell$ to $g_\ell$. This
permutation induces a bijection on~$\BL$:
\[
\alpha=a_1+a_ii+a_jj+a_kk\mapsto 
r(\alpha)=a_{r(1)}+a_{r(i)}i+a_{r(j)}j+a_{r(k)}k\,.
\]
If $(\alpha_1,\ldots,\alpha_m)$ has type $(g_1,\ldots,g_m)$,
then $\big(r(\alpha_1),\ldots,r(\alpha_m)\big)$ has type
$(h_1,\ldots,h_m)$, so the number of $m$-icubes of type
$(g_1,\ldots,g_m)$ does not depend on
$(g_1,\ldots,g_m)$. The number of possible types is $4\cdot
3\cdot\ldots\cdot (4-m+1)=24/(4-m)!$. Hence:

\begin{clm}\label{odd_reduction_1}
Let $N$ be odd. Then $f_m(N)=24M/(4-m)!$, where $M$ is the
number of orderly $m$-icubes with edge norm~$N$.\qed
\end{clm}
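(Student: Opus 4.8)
The plan is to sort all $m$-icubes of edge norm $N$ by their \emph{type}. Since $N$ is odd, Claim~\ref{Sg_elementary}~(1) guarantees that each component $\alpha_\ell$ of an $m$-icube $(\alpha_1,\dots,\alpha_m)$ of edge norm $N$ lies in exactly one $S_{g_\ell}$, and that the $g_\ell$ are pairwise distinct, so the type $(g_1,\dots,g_m)$ is a well-defined sequence of distinct elements of $K=\{1,i,j,k\}$. Grouping by type partitions the set of $m$-icubes of edge norm $N$ into classes indexed by ordered $m$-tuples of distinct elements of $K$, and there are exactly $4\cdot 3\cdots(4-m+1)=24/(4-m)!$ such tuples. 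So the claim reduces to showing that each of these classes has cardinality $M$, the number of orderly $m$-icubes of edge norm~$N$.

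To see this, fix an arbitrary type $(g_1,\dots,g_m)$ and the orderly type $(h_1,\dots,h_m)$ (that is, $(1)$, $(1,i)$, $(1,i,j)$, or $(1,i,j,k)$ according to $m$), and pick a permutation $r$ of the four-element set $K$ with $r(h_\ell)=g_\ell$ for all $\ell$. I would then consider the induced map $\alpha=a_1+a_ii+a_jj+a_kk\mapsto r(\alpha)=a_{r(1)}+a_{r(i)}i+a_{r(j)}j+a_{r(k)}k$ on $\BL$. This is just a permutation of the four coordinates, hence a bijection of $\BL$ that preserves the sum of squares of the coefficients (so it preserves norm and edge norm) and preserves orthogonality of coefficient-vectors; therefore it maps $m$-icubes of edge norm $N$ bijectively to $m$-icubes of edge norm $N$. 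A one-line check against the definition of $S_g$ shows $r(\alpha)\in S_h$ exactly when $\alpha\in S_{r(h)}$, so $r$ carries an $m$-icube of type $(g_1,\dots,g_m)$ to one of type $(h_1,\dots,h_m)$, i.e.\ to an orderly one. Being invertible, $r$ restricts to a bijection between the class of type-$(g_1,\dots,g_m)$ $m$-icubes of edge norm $N$ and the orderly ones, so that class has $M$ elements. (If $M=0$, every class is empty and the formula reads $0=0$; this causes no trouble.)

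Putting the pieces together, $f_m(N)$ is the sum over the $24/(4-m)!$ types of the number of $m$-icubes of that type, each summand being $M$, so $f_m(N)=24M/(4-m)!$. As the discussion preceding the statement already records, this is essentially bookkeeping; the only point deserving care is that the permutation $r$ acts on $K$ purely as an index set of coordinates — not through the Klein multiplication $*$ on $K$ — so that $\alpha\mapsto r(\alpha)$ is an arbitrary coordinate permutation and need not be (and in general is not) a ring map. Granting that, no step presents a genuine obstacle: preservation of norm and orthogonality is immediate, the effect on the sets $S_g$ is a two-line congruence argument, and counting the types is elementary combinatorics.
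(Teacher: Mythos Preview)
Your proof is correct and follows essentially the same approach as the paper: the discussion preceding the claim in the paper already introduces the coordinate-permutation map $r$ (with the same formula $r(\alpha)=a_{r(1)}+a_{r(i)}i+a_{r(j)}j+a_{r(k)}k$) to show that the number of $m$-icubes of a given type is independent of the type, and then counts the $24/(4-m)!$ types. Your write-up is slightly more detailed in checking the effect of $r$ on the sets $S_g$ and in noting that $r$ need not be a ring map, but the underlying argument is identical.
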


\begin{thm}\label{twin_decomp_odd}
Let $\gamma$ and $\delta\in\BL$ be primary quaternions, and
$\vep_1=\pm 1$, $\vep_2=\pm i$, $\vep_3=\pm j$ and
$\vep_4=\pm k$. Then
$(\gamma\vep_1\delta,\ldots,\gamma\vep_m\delta)$ is an
orderly $m$-icube in\/~$\BL$. Conversely, every orderly
$m$-icube in\/~$\BL$ with odd edge norm can be obtained this
way.
\end{thm}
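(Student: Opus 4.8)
The plan is to split the statement into its two directions and to lean heavily on the already-established results for icubes in $\BE$ (Theorem~\ref{twin_decomp_thm}) and on the arithmetic of the sets $S_g$ (Claims~\ref{Sg_elementary} and~\ref{Sg_elementary2}).

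For the easy direction, suppose $\gamma,\delta\in\BL$ are primary and $\vep_1,\dots,\vep_m$ are the indicated units. Since $(\vep_1,\dots,\vep_m)$ is an $m$-icube in $Q^m$ of type $(1,i,j,k)$ (truncated to length $m$), Corollary~\ref{twin_char_cor} immediately gives that $(\gamma\vep_1\delta,\dots,\gamma\vep_m\delta)$ is an $m$-icube. It lies in $\BL$: by Claim~\ref{Sg_elementary2}(2) the product $\gamma\delta$ of two primary quaternions is again primary, hence in $S_1\subseteq\BL$, and then $\gamma\vep_\ell\delta=\gamma\delta'$ where $\delta'=\delta\vep_\ell'$ — more cleanly, $\gamma\vep_\ell\delta$ is a product of three Lipschitz quaternions (note $\vep_\ell\in Q\subseteq\BL$), so it is in $\BL$ automatically. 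The type is $(g_1,\dots,g_m)$ with $g_\ell=1*\Twin(\vep_\ell)$; since $\gamma\delta\in S_1$, Claim~\ref{Sg_elementary}(5) shows $\gamma\vep_\ell\delta\in S_{g}$ iff $\vep_\ell\in S_g$, and $\pm1\in S_1$, $\pm i\in S_i$, etc., so the type is exactly $(1,i,j,k)$ truncated: the icube is orderly.

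For the converse, start with an orderly $m$-icube $(\alpha_1,\dots,\alpha_m)$ in $\BL\subseteq\BE$ of odd edge norm. Theorem~\ref{twin_decomp_thm} supplies $\gamma_0,\delta_0\in\BE$ and units $\vep_1^0=1,\vep_2^0,\dots$ in $Q$ with $\alpha_\ell=\gamma_0\vep_\ell^0\delta_0$. Now I push $\gamma_0$ and $\delta_0$ to primary form: by Claim~\ref{Sg_elementary2}(1) write $\gamma_0=\gamma\mu$ with $\gamma$ primary and $\mu$ a unit, and $\delta_0=\nu\delta$ with $\delta$ primary and $\nu$ a unit. Then $\alpha_\ell=\gamma(\mu\vep_\ell^0\nu)\delta=\gamma\vep_\ell\delta$ where $\vep_\ell=\mu\vep_\ell^0\nu$ is a unit in $\BE$. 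It remains to show each $\vep_\ell\in Q$ and in fact $\vep_1=\pm1$, $\vep_2=\pm i$, $\vep_3=\pm j$, $\vep_4=\pm k$. Since $\gamma\vep_\ell\delta=\alpha_\ell\in\BL$ and $\gamma,\delta$ are primary, Claim~\ref{Sg_elementary2}(3) (applied after absorbing $\delta$: $\gamma\vep_\ell\delta\in\BL$ forces $\gamma\vep_\ell\in\BL$ since $\delta$ is primary, and then $\vep_\ell\in Q$) yields $\vep_\ell\in Q$. For the specific signs, use the type: $\gamma\delta\in S_1$, so by Claim~\ref{Sg_elementary}(5), $\alpha_\ell=\gamma\vep_\ell\delta\in S_{g_\ell}$ forces $\vep_\ell\in S_{g_\ell}$; orderliness means $g_\ell$ is the $\ell$-th entry of $(1,i,j,k)$, and the only units of $\BE$ lying in $S_1,S_i,S_j,S_k$ are $\pm1,\pm i,\pm j,\pm k$ respectively.

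The main obstacle is the bookkeeping in the converse: one must be careful that ``primary'' is only defined for odd norm (fine here) and that the reduction $\gamma\vep_\ell\delta\in\BL\Rightarrow\vep_\ell\in Q$ really goes through. The clean way is: $\delta$ primary means $\delta\in S_1$, so by Claim~\ref{Sg_elementary}(5) a right multiple $\xi\delta\in\BE$ lies in $\BL$ iff $\xi\in\BL$; apply with $\xi=\gamma\vep_\ell$ to get $\gamma\vep_\ell\in\BL$, then apply Claim~\ref{Sg_elementary2}(3) with the primary quaternion $\gamma$ to conclude $\vep_\ell\in Q$. Everything else is a direct appeal to the cited claims.
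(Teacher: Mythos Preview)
Your argument is correct, but the converse direction takes a different route from the paper's. The paper does not invoke Theorem~\ref{twin_decomp_thm} as a black box; instead it reruns the prime-extraction of Lemma~\ref{icube_param} from scratch, at each step choosing the factor~$\pi$ to be \emph{primary} (possible by Claim~\ref{Sg_elementary2}(1)). Claim~\ref{Sg_elementary} then guarantees that pulling a primary~$\pi$ out of an orderly icube in~$\BL$ leaves another orderly icube in~$\BL$, so the induction terminates with $\gamma,\delta$ already primary and the $\vep_\ell$ landing in the correct~$S_g$ by construction. Your approach---take the $\BE$-decomposition, push units across to make $\gamma_0,\delta_0$ primary, and then force $\vep_\ell\in Q$ via Claims~\ref{Sg_elementary}(5) and~\ref{Sg_elementary2}(3)---is a legitimate shortcut that reuses Theorem~\ref{twin_decomp_thm} rather than reproving it, at the cost of a slightly more delicate ``$\xi\delta\in\BL\Rightarrow\xi\in\BL$'' step. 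Both methods are short; the paper's is more transparently constructive, yours is more economical as a corollary of what is already established.

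Two cosmetic points for a final version: the symbol $\Twin(\vep_\ell)$ is never defined in the paper and should be dropped, and the first pass at the $\BL$-membership argument in the forward direction is garbled before you correct yourself to the clean ``product of three Lipschitz quaternions'' observation---just keep the clean version.
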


\begin{proof}
Since $\gamma$ and $\delta$ are primary,
$(\gamma\vep_1\delta,\ldots,\gamma\vep_m\delta)$ is orderly
by $(4)$ of Claim~\ref{Sg_elementary}. Conversely, suppose
that $C$ is an orderly $m$-icube in\/~$\BL$ with odd
edge norm. Apply Lemma~\ref{icube_param} successively, but
in every step make sure that $\pi$ is primary (this can be
done by
Claim~\ref{Sg_elementary2}). Claim~\ref{Sg_elementary}
ensures that after pulling out~$\pi$ we get an orderly icube
in\/~$\BL$. Thus we get a representation
$C=(\gamma\vep_1\delta,\ldots,\gamma\vep_m\delta)$, where
$\gamma$ and~$\delta$ are primary. Since
$(\vep_1,\ldots,\vep_m)$ is orderly, $\vep_1=\pm 1$,
$\vep_2=\pm i$, $\vep_3=\pm j$ and $\vep_4=\pm k$.
\end{proof}

Now we prove the extension property
(Theorem~\ref{main_extension}). Suppose that an $m$-icube
$C$ in\/~$\BL$ is given. Claims~\ref{odd_reduction} and
\ref{odd_reduction_2} show that we can write $C$ as
$(\eta\beta_1,\ldots,\eta\beta_m)$, where
$(\beta_1,\ldots,\beta_m)$ is an $m$-icube in\/~$\BL$ with
odd edge norm. By rearranging the coordinates of the vectors
we get an $m$-icube of the form
$(\gamma\vep_1\delta,\ldots,\gamma\vep_m\delta)$ by
Theorem~\ref{twin_decomp_odd}. This clearly extends to an
$m+1$-icube
$(\gamma\vep_1\delta,\ldots,\gamma\vep_{m+1}\delta)$. Permuting
the coordinates back, and then multiplying by $\eta$ we get
the desired extension of~$C$.

\section{Counting}
\label{sec_count}

To deal with the case $m=3$ and $m=4$ we prove uniqueness in
Theorem~\ref{twin_decomp_odd}. We keep the notation that
$\vep_1=\pm 1$, $\vep_2=\pm i$, $\vep_3=\pm j$ and
$\vep_4=\pm k$.

\begin{lm}\label{twin_unique}
Suppose that
$\gamma_1\vep_\ell\delta_1=\gamma_2\vep_\ell\delta_2$ for
$\ell=u,v$ and
$\Norm(\gamma_1\delta_1)=\Norm(\gamma_2\delta_2)\ne 0$.
Then $\konj{\gamma_1}\gamma_2$ permutes with
$\vep_u\konj{\vep_v}$.
\end{lm}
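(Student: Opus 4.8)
The plan is to work in the division ring of rational quaternions, into which $\BL$ and $\BE$ embed. This is legitimate because $\Norm(\gamma_1\delta_1)=\Norm(\gamma_2\delta_2)\ne 0$ forces $\gamma_1,\gamma_2,\delta_1,\delta_2$ all to be nonzero, hence invertible there (with $\gamma^{-1}=\konj{\gamma}/\Norm(\gamma)$); moreover each $\vep_\ell$ is a unit, so $\Norm(\vep_\ell)=1$ and $\vep_\ell^{-1}=\konj{\vep_\ell}$. Put $g=\gamma_2^{-1}\gamma_1$.

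First I would eliminate $\delta_1$ and $\delta_2$. Multiplying the hypothesis $\gamma_1\vep_\ell\delta_1=\gamma_2\vep_\ell\delta_2$ on the left by $\gamma_2^{-1}$, then by $\vep_\ell^{-1}$, and on the right by $\delta_1^{-1}$ gives $\vep_\ell^{-1}g\vep_\ell=\delta_2\delta_1^{-1}$, for $\ell=u$ and for $\ell=v$ alike. Hence $\vep_u^{-1}g\vep_u=\vep_v^{-1}g\vep_v$, and conjugating this equality by $\vep_v$ rearranges it to $(\vep_v\vep_u^{-1})\,g=g\,(\vep_v\vep_u^{-1})$; that is, $g$ commutes with $\vep_v\vep_u^{-1}=\vep_v\konj{\vep_u}$.

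Finally I would transport this to the stated form. Since $\Norm(\gamma_2)$ is a nonzero central rational scalar, $g$ commutes with $\vep_v\konj{\vep_u}$ if and only if $\konj{\gamma_2}\gamma_1=\Norm(\gamma_2)\,g$ does. Now apply the conjugation anti-automorphism $x\mapsto\konj{x}$, which satisfies $\konj{xy}=\konj{y}\,\konj{x}$ and therefore carries any commuting pair to a commuting pair: from ``$\konj{\gamma_2}\gamma_1$ commutes with $\vep_v\konj{\vep_u}$'' it produces ``$\konj{(\konj{\gamma_2}\gamma_1)}$ commutes with $\konj{(\vep_v\konj{\vep_u})}$'', i.e. $\konj{\gamma_1}\gamma_2$ commutes with $\vep_u\konj{\vep_v}$, which is exactly the assertion of the lemma.

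There is essentially no obstacle here; the only points needing care are that the computation takes place in the rational quaternion division ring rather than in $\BL$ (so that the inverses make sense), the reversal of the order of factors under conjugation, and the use of $\Norm(\vep_\ell)=1$ to replace $\vep_\ell^{-1}$ by $\konj{\vep_\ell}$. If one prefers to stay inside $\BL$, the same identities can be obtained multiplicatively by clearing the norm denominators, but the division-ring phrasing is the cleanest.
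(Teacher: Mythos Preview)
Your argument is correct and is essentially the paper's proof, rephrased in the division ring of rational quaternions: where the paper multiplies the $u$-equation by the conjugate of the $v$-equation and then cancels the common factor $\Norm(\gamma_1\delta_1)=\Norm(\gamma_2\delta_2)$, you invert and cancel directly. Your version in fact shows that the equality of the two norms is not needed beyond both being nonzero---which is automatic from $\gamma_1\vep_u\delta_1=\gamma_2\vep_u\delta_2$ by multiplicativity of the norm.
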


\begin{proof}
Multiply the first equation by the conjugate of the
second. We obtain that
$\Norm(\delta_1)\gamma_1\vep_u\konj{\vep_v}\;\konj{\gamma_1}=
\Norm(\delta_2)\gamma_2\vep_u\konj{\vep_v}\;\konj{\gamma_2}$. Now
multiply on the left by~$\konj{\gamma_1}$ and on the right
by~$\gamma_2$, and then simplify by
$\Norm(\gamma_1\delta_1)=\Norm(\gamma_2\delta_2)$.
\end{proof}

An $m$-icube is called \emph{primitive}, if the $4m$
components of its $m$ vectors have no common divisor other
than~$\pm 1$.

\begin{thm}\label{icube_unique}
Suppose that $m\ge 3$ and
$(\gamma_1\vep_1\delta_1,\ldots,\gamma_1\vep_m\delta_1)=
(\gamma_2\vep_1\delta_2,\ldots,\gamma_2\vep_m\delta_2)$ are
two representations of an icube given by
Theorem~\ref{twin_decomp_odd}, where $\gamma_1$
and~$\gamma_2$ are primitive. Then $\gamma_1=\gamma_2$ and
$\delta_1=\delta_2$.  Furthermore, such an icube
$(\gamma_1\vep_1\delta_1,\ldots,\gamma_1\vep_m\delta_1)$ is
primitive if and only if $\gamma_1$ and $\delta_1$ are both
primitive.
\end{thm}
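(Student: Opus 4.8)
The plan is to treat the two assertions separately; in both, the point is that $m\ge3$ makes the units $\vep_1,\vep_2,\vep_3$ available, equivalently the triple $1,i,j$, and this is exactly enough to pin everything down (whereas $1,i$ alone would not be).

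\emph{Uniqueness.} First I would apply Lemma~\ref{twin_unique} to the index pairs $(u,v)=(1,2)$ and $(u,v)=(1,3)$; this is legitimate because the common edge norm $\Norm(\gamma_1\delta_1)=\Norm(\gamma_2\delta_2)$ is nonzero. It gives that $\konj{\gamma_1}\gamma_2$ commutes with $\vep_1\konj{\vep_2}\in\{\pm i\}$ and with $\vep_1\konj{\vep_3}\in\{\pm j\}$, hence with both $i$ and $j$. Since $i,j$ generate the rational quaternion algebra, $\konj{\gamma_1}\gamma_2$ is central there, and as it lies in $\BL$ it is an ordinary integer $n$. Multiplying $\konj{\gamma_1}\gamma_2=n$ on the left by $\gamma_1$ gives $\Norm(\gamma_1)\gamma_2=n\gamma_1$, so $\Norm(\gamma_1)\mid n$ because $\gamma_1$ is primitive; writing $n=c\,\Norm(\gamma_1)$ we get $\gamma_2=c\gamma_1$, and primitivity of $\gamma_2$ forces $c=\pm1$. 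As $\gamma_1,\gamma_2$ are both primary while exactly one of $\gamma_1,-\gamma_1$ is primary (observed after Claim~\ref{Sg_elementary}), $c=1$, i.e.\ $\gamma_1=\gamma_2$. Cancelling $\gamma_1$ and the unit $\vep_1$ in the $\ell=1$ component of the displayed equality — both licit in the domain $\BE$ — yields $\delta_1=\delta_2$.

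\emph{Primitivity.} One direction is immediate: if a (necessarily odd) prime $p$ divides all coefficients of $\gamma_1$, or of $\delta_1$, then it divides every $\gamma_1\vep_\ell\delta_1$, so the icube is not primitive; hence a primitive icube has both $\gamma_1$ and $\delta_1$ primitive. Conversely, suppose $\gamma_1,\delta_1$ are primitive but some prime $p$ divides the whole icube; as the edge norm is odd, $p$ is odd and $\BL/p\BL\cong M_2(\mathbb{F}_p)$. Writing bars for reductions, $\bar\gamma_1\ne0\ne\bar\delta_1$ by primitivity, while $\bar\gamma_1X\bar\delta_1=0$ for $X\in\{1,\bar i,\bar j\}$ since $\vep_1,\vep_2,\vep_3$ are all available when $m\ge3$. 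From $\bar\gamma_1\bar\delta_1=0$ with $\bar\gamma_1\ne0$ and $\bar\delta_1\ne0$, the matrix $\bar\gamma_1$ has rank one, so $V=\ker\bar\gamma_1$ is a line containing the nonzero space $\mathrm{Im}\,\bar\delta_1$, hence equal to it; then $\bar\gamma_1\bar i\bar\delta_1=0$ reads $\bar iV\subseteq V$, and likewise $\bar jV\subseteq V$. Thus $V$ is a common eigenline of $\bar i$ and $\bar j$; but $\bar i\bar j=-\bar j\bar i$ with both invertible, so eigenvalues $\lambda,\mu$ on $V$ would satisfy $\lambda\mu=-\lambda\mu\ne0$, a contradiction. (Equivalently, $\bar i$ and $\bar j$ generate $M_2(\mathbb{F}_p)$, which acts irreducibly on $\mathbb{F}_p^2$.)

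I expect the converse part of the primitivity statement to be the main obstacle: it is the place where one must exclude a ``hidden'' common prime factor, and also the place where $m\ge3$ is genuinely needed — the full triple $1,i,j$, not merely $1,i$, since for $m=2$ one only learns that $\konj{\gamma_1}\gamma_2$, respectively $\ker\bar\gamma_1$, is compatible with $i$ alone, which is too weak (and the conclusion indeed fails there). An alternative argument staying inside $\BE$ would begin from $p\mid\gamma_1\delta_1$, observe that primitivity forces $p\mid\Norm(\gamma_1)$ and $p\mid\Norm(\delta_1)$, pull out norm-$p$ left divisors via Lemma~\ref{quat_lnko}, and propagate the factor using Lemma~\ref{twin_primeprop}; this avoids matrices but is more delicate to organise.
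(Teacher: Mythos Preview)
Your uniqueness argument is essentially the paper's: apply Lemma~\ref{twin_unique} to $(1,2)$ and $(1,3)$ so that $\konj{\gamma_1}\gamma_2$ commutes with $i$ and $j$, hence is a rational integer; then primitivity of both $\gamma_j$ forces $\gamma_2=\pm\gamma_1$, and primariness gives equality, after which $\delta_1=\delta_2$ follows by cancellation.

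For the primitivity clause you take a genuinely different route. The paper bootstraps from the uniqueness just proved: assuming $\gamma_1,\delta_1$ primitive but $C$ not, it writes $C=cC'$ with $c>1$ and $C'$ primitive, represents $C'$ via Theorem~\ref{twin_decomp_odd} as $(\gamma_3\vep_\ell\delta_3)_\ell$ with $\gamma_3$ primitive, observes that $(\gamma_3\vep_\ell(c\delta_3))_\ell$ is then a second representation of $C$ of the required form, and invokes uniqueness to get $\delta_1=c\delta_3$, contradicting primitivity of~$\delta_1$. This is short and never leaves the quaternionic machinery already in place. Your argument---reduce modulo~$p$ to $M_2(\mathbb{F}_p)$ and derive a contradiction from a common eigenline of $\bar i$ and $\bar j$---is correct and more self-contained in that it does not re-invoke Theorem~\ref{twin_decomp_odd}, and it makes the role of $m\ge3$ visibly the irreducibility of $\langle \bar i,\bar j\rangle$ on $\mathbb{F}_p^2$; the cost is importing the isomorphism $\BL/p\BL\cong M_2(\mathbb{F}_p)$, which the paper never needs. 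The $\BE$-internal alternative you sketch at the end is in spirit closer to the paper, but still not the same as its uniqueness bootstrap.
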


\begin{proof}
Lemma~\ref{twin_unique} shows that $\konj{\gamma_1}\gamma_2$
permutes with $\vep_1\konj{\vep_2}=\pm i$ and with
$\vep_1\konj{\vep_3}=\pm j$. Therefore
$d=\konj{\gamma_1}\gamma_2$ is a real number. As $\gamma_1$
and $\gamma_2$ are primary, $d\in\BZ$. We have
$d\gamma_1=\Norm(\gamma_1)\gamma_2$. Since $\gamma_1$ and
$\gamma_2$ are primitive, the gcd of the coefficients of the
two sides of this equation is $\Norm(\gamma_1)=\pm
d$. Therefore $\gamma_2=\pm\gamma_1$. Since they are
primary, they are equal. Then
$\gamma_1\vep_1\delta_1=\gamma_2\vep_1\delta_2$
yields~$\delta_1=\delta_2$.

Now let
$C=(\gamma_1\vep_1\delta_1,\ldots,\gamma_1\vep_m\delta_1)$
such that $\gamma_1$ and $\delta_1$ are primitive and assume
to get a contradiction that $C$ is not primitive. Write $C$
as $cC'$, where $c>1$ and $C'$ is a primitive $m$-icube.
Then $C'$ can be represented as
$(\gamma_3\vep_1\delta_3,\ldots,\gamma_3\vep_m\delta_3)$,
where $\gamma_3$ must be primitive, so $C$ has a
representation $C=\big(\gamma_3\vep_1(c\delta_3),\ldots,
\gamma_3\vep_m(c\delta_3)\big)$. Here $c\delta_3$ is also
primary, since $c$ is a positive odd integer. The uniqueness
statement proved in the previous paragraph shows that
$\delta_1=c\delta_3$, contradicting the assumption that
$\delta_1$ is primitive.
\end{proof}

\begin{cor}\label{icube_orderly_count}
Suppose that $m\ge 3$ and $N$ is an odd integer. Then the
number of orderly, primitive $m$-icubes with edge norm~$N$
is
\[
k(N)=2^m\sum_{d\mid N} h(d)h(N/d)\,.
\]
Here $h(d)=d\prod_p \big(1+(1/p)\big)$, where $p$ runs over
the prime divisors of $d$.
\end{cor}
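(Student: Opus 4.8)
The plan is to obtain $k(N)$ by counting the data in the parametrization of orderly icubes supplied by Theorems~\ref{twin_decomp_odd} and~\ref{icube_unique}. Precisely, I would set up a bijection
\[
\big(\gamma,\delta,(\vep_1,\ldots,\vep_m)\big)\;\longmapsto\;(\gamma\vep_1\delta,\ldots,\gamma\vep_m\delta)
\]
from the set of triples in which $\gamma,\delta\in\BL$ are primary \emph{and primitive} with $\Norm(\gamma)\Norm(\delta)=N$ and $\vep_1=\pm1,\ \vep_2=\pm i,\ \vep_3=\pm j,\ \vep_4=\pm k$, onto the set of orderly primitive $m$-icubes of edge norm $N$; the corollary then follows by counting the triples.

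First I would check the map is well defined: its image is an orderly $m$-icube by Theorem~\ref{twin_decomp_odd}, has edge norm $\Norm(\gamma)\Norm(\delta)=N$ because the $\vep_\ell$ are units, and is primitive by the last assertion of Theorem~\ref{icube_unique} (both $\gamma$ and $\delta$ being primitive). For surjectivity, I start from a primitive orderly $m$-icube $C$ of edge norm $N$ (note $N$ is odd), use Theorem~\ref{twin_decomp_odd} to write $C=(\gamma\vep_1\delta,\ldots,\gamma\vep_m\delta)$ with $\gamma,\delta$ primary, and then argue these are automatically primitive: if a rational prime $p$ divided every coefficient of $\gamma$, then writing $\gamma=p\gamma'$ with $\gamma'\in\BL$ shows each component $\gamma\vep_\ell\delta=p(\gamma'\vep_\ell\delta)$ has all coefficients divisible by $p$, contradicting the primitivity of $C$; the symmetric argument on the right handles $\delta$. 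Injectivity is essentially Theorem~\ref{icube_unique}, because once $\gamma$ and $\delta$ are determined, each $\vep_\ell=\gamma^{-1}\alpha_\ell\delta^{-1}$ is determined by the $\ell$-th component $\alpha_\ell$.

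Granting the bijection, the number of orderly primitive $m$-icubes of edge norm $N$ equals $2^m\sum_{d\mid N}A(d)A(N/d)$, where $A(d)$ is the number of primary primitive quaternions of norm $d$, and the factor $2^m$ is the number of admissible sign tuples $(\vep_1,\ldots,\vep_m)$. For odd $d>1$, Claim~\ref{twin_primary_count} gives $A(d)=h(d)$. For $d=1$ the only Lipschitz quaternion of norm $1$ lying in $S_1$ with coefficient sum $\equiv 1\ (4)$ is $1$ itself, so $A(1)=1$, which agrees with $h(1)$ under the empty-product convention. Substituting yields $k(N)=2^m\sum_{d\mid N}h(d)h(N/d)$, as claimed.

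The step I expect to be the main obstacle is making injectivity airtight, since Theorem~\ref{icube_unique} is stated for two representations that share the \emph{same} tuple $(\vep_1,\ldots,\vep_m)$, whereas the bijection above allows the sign tuple to vary. I would close this gap by rerunning the computation behind Theorem~\ref{icube_unique}: multiplying the $\ell$-th equation by the conjugate of the first shows that $\konj{\gamma_1}\gamma_2$ conjugates some element of $\{\pm i\}$ to an element of $\{\pm i\}$ and some element of $\{\pm j\}$ to an element of $\{\pm j\}$, which forces $\konj{\gamma_1}\gamma_2$ to be a real multiple of $1,i,j$ or $k$. In the first case $\gamma_1=\pm\gamma_2$, hence $\gamma_1=\gamma_2$ since both are primary, and then $\delta_1=\delta_2$ and all the signs match; the other three cases are impossible because, by Claim~\ref{Sg_elementary}~$(4)$, they would place $\gamma_2$ in $S_i$, $S_j$ or $S_k$ rather than in $S_1$. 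Apart from this, only the $d=1$ boundary value requires any care.
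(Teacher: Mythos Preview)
Your argument coincides with the paper's: invoke the representation of Theorems~\ref{twin_decomp_odd} and~\ref{icube_unique}, set $d=\Norm(\gamma)$, and count $2^m$ sign choices times $h(d)h(N/d)$, summed over $d\mid N$. Your flagged concern is well taken---Theorem~\ref{icube_unique} is literally stated with the same $\vep_\ell$ on both sides, so the paper glosses over the possibility of differing sign tuples---and your fix (observing that $\konj{\gamma_1}\gamma_2\in S_1$ by Claim~\ref{Sg_elementary}, which rules out the $i,j,k$ cases) is exactly the missing line.
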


\begin{proof}
Recall that $h(d)$ is the number of primitive, primary
quaternions with norm~$d$ by
Claim~\ref{twin_primary_count}. Consider the unique
representation given by Theorem~\ref{icube_unique}, and
let $d=\Norm(\gamma_1)$. Then $\Norm(\delta_1)=N/d$. These
two quaternions can be chosen $h(d)h(N/d)$ ways, and $d$ can
be any divisor of~$N$. Finally, there are $2^m$
possibilities to chose the signs of $\vep_1,\ldots,\vep_m$.
\end{proof}

We can now compute $f_4$ as stated in
Theorem~\ref{main_count}. Fix $m=4$ and write $N=2^nD$,
where $D$ is odd. By Claims \ref{odd_reduction} and
\ref{odd_reduction_2} we have that $f_4(N)=3f_4(D)$ if $n\ge
1$. Next Claim~\ref{odd_reduction_1} shows that
$f_4(D)=24M$, where $M$ is the number of orderly $m$-icubes
with edge norm~$D$. Finally, each orderly $m$-icube can be
written uniquely as $C=cC'$, where $c$ is a positive integer
and $C'$ is primitive. Clearly, $c\mid D^2$, and therefore
\[
M=\sum_{c^2\mid N} k(N/c^2)\,,
\]
where $k$ is the function defined in
Corollary~\ref{icube_orderly_count} for $m=4$. Thus
\[
f_4(D)=(16\cdot 24)\sum_{c^2\mid N} 
\sum_{d\,\mid\, (N/c^2)}h(d)h\big(N/(c^2d)\big)\,.
\]
It is a well-known fact that the convolution of
multiplicative functions is multiplicative. Since $h$ is
obviously multiplicative, so is $k/16$, which is the
convolution of $h$ by itself. The function assigning $1$ to
squares and $0$ to all other integers is also
multiplicative, so the double sum above (which is
$f_4(D)/384$) is also multiplicative for odd values
of~$D$. Finally the remarks at the beginning of this
argument show that $f_4(N)/384$ is multiplicative on the set
of positive integers.

The proof above clearly shows that $f_4(2^n)=384\cdot 3$
for~$n\ge 1$. If $p$ is an odd prime, then it is a routine
calculation to prove, using the last displayed formula, that
the value of $f_4(p^n)$ is the one given in $(2)$ of
Theorem~\ref{main_count}. This somewhat complicated
summation is left to the reader.

To show that $f_3(N)=f_4(N)/2$ one can either go through the
argument above with $m=3$, or invoke
Theorem~\ref{main_extension_gen} (stating that each
$3$-icube has exactly two extensions in dimension~$4$). To
compute $f_2(N)$ we need to improve
Theorem~\ref{twin_decomp_odd}, since uniqueness does not
hold for $m=2$.

\begin{thm}\label{icube_unique2}
Every orderly $2$-icube in\/~$\BL$ with odd edge norm can be
written in the from
$(\gamma\vep_1\delta,\gamma\vep_2\delta)$, where $\gamma$
and $\delta\in\BL$ are primary quaternions such that
$\gamma\,i\,\konj{\gamma}$ is primitive and $\vep_1=\pm 1$,
$\vep_2=\pm i$. Here $\gamma$ and $\delta$ are uniquely
determined. Such an icube is primitive if and only if
$\delta$ is primitive as well.
\end{thm}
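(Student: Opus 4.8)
The plan is to single out, among the decompositions given by Theorem~\ref{twin_decomp_odd}, the one in which the pure quaternion $\gamma\,i\,\konj{\gamma}$ is primitive; this quantity is a good marker because for the icube $(v_1,v_2)$ represented as $(\gamma\vep_1\delta,\gamma\vep_2\delta)$ one has $v_2v_1^{-1}=\gamma(\vep_2\konj{\vep_1})\gamma^{-1}=\pm\gamma\,i\,\konj{\gamma}/\Norm(\gamma)$, so $\gamma\,i\,\konj{\gamma}$ is determined by the icube up to a positive rational factor. For \emph{existence} I would start from a representation $(\gamma_0\vep_1\delta_0,\gamma_0\vep_2\delta_0)$ with $\gamma_0,\delta_0$ primary, write $\gamma_0=z+wj$ with $z,w\in\BZ[i]$, and compute that the $i$-coordinate of $\gamma_0\,i\,\konj{\gamma_0}$ is $|z|^2-|w|^2$ while its $j$- and $k$-coordinates are, up to sign, the real and imaginary parts of $2zw$. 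Let $m$ be the content of $\gamma_0\,i\,\konj{\gamma_0}$, so $m\mid|z|^2-|w|^2$ in $\BZ$ and $m\mid zw$ in $\BZ[i]$. The key step is a valuation count in $\BZ[i]$ (treating split and inert primes separately, and using both divisibilities) showing that $\gcd(z,\bar w)$ contains a Gaussian divisor $\xi$ with $\Norm(\xi)=m$, $\xi\mid z$ and $\xi\mid\bar w$; after replacing $\xi$ by its unique primary unit-multiple I may take $\xi$ primary. Setting $\gamma:=\gamma_0\xi^{-1}$ (which lies in $\BL$ because $\xi\mid z$ and $\bar\xi\mid w$) and $\delta:=\xi\delta_0$, both are primary by Claim~\ref{Sg_elementary2} (a right quotient, resp.\ a product, of primary quaternions), and since $\xi$ commutes with $\vep_1$ and $\vep_2$ one gets $v_\ell=\gamma\vep_\ell\delta$, with $\gamma\,i\,\konj{\gamma}=\gamma_0\,i\,\konj{\gamma_0}/m$ primitive.

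For \emph{uniqueness}, given two such representations $(\gamma_1\vep_1\delta_1,\gamma_1\vep_2\delta_1)$ and $(\gamma_2\vep_1'\delta_2,\gamma_2\vep_2'\delta_2)$ of the same icube, I would cross-multiply the coordinate equalities and use $\gamma_s\konj{\gamma_s}=\Norm(\gamma_s)$ to get $\Norm(\gamma_2)\,\gamma_1\vep_2\konj{\gamma_1}=\Norm(\gamma_1)\,\gamma_2\vep_2'\konj{\gamma_2}$; since $\gamma_s\vep\konj{\gamma_s}=\pm\gamma_s\,i\,\konj{\gamma_s}$ is primitive and $\Norm(\gamma_s)$ is a positive integer, comparing contents forces $\Norm(\gamma_1)=\Norm(\gamma_2)$ and $\gamma_1\,i\,\konj{\gamma_1}=\pm\gamma_2\,i\,\konj{\gamma_2}$. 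The sign $-$ is impossible: $-\gamma_2\,i\,\konj{\gamma_2}=(\gamma_2j)\,i\,\konj{(\gamma_2j)}$ with $\gamma_2j\in S_j$, so the uniqueness clause of Lemma~\ref{3repr} would make $\gamma_1$ a right associate of $\gamma_2j$ by a unit in $\{1,-1,i,-i\}$ and hence put $\gamma_1$ in $S_j$ or $S_k$, contradicting $\gamma_1\in S_1$. So $\gamma_1\,i\,\konj{\gamma_1}=\gamma_2\,i\,\konj{\gamma_2}$; Lemma~\ref{3repr} gives $\gamma_2=\gamma_1\rho$ with $\rho\in\{1,-1,i,-i\}$, Claim~\ref{Sg_elementary}~(5) gives $\rho\in S_1$ so $\rho=\pm1$, and primarity forces $\rho=1$; then $\gamma_1=\gamma_2$ and cancellation in the first coordinate yields $\delta_1=\delta_2$.

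For the \emph{primitivity} clause: if a scalar $d$ divides $\delta$ it divides every $v_\ell=\gamma\vep_\ell\delta$, so a primitive icube forces $\delta$ primitive. Conversely, if $\delta$ is primitive but some prime $p$ divides all coordinates of the icube, then $p$ is odd (the edge norm is odd), $p\mid\gamma\delta$, $p\mid\gamma\,i\,\delta$, and neither $p\mid\gamma$ (which would give $p^2\mid\gamma\,i\,\konj{\gamma}$) nor $p\mid\delta$ holds; passing to $\BL/p\BL\cong M_2(\BZ/p\BZ)$, the images $\bar\gamma,\bar\delta$ are nonzero with $\bar\gamma\bar\delta=0=\bar\gamma\,\bar\imath\,\bar\delta$ and $\bar\imath$ invertible, so (since invertibility of either would kill the other) both have rank $1$, and $\operatorname{im}\bar\delta=\ker\bar\gamma=\operatorname{im}\overline{\konj{\gamma}}$ is a line invariant under $\bar\imath$; then $\bar\gamma\,\bar\imath\,\overline{\konj{\gamma}}=0$, i.e.\ $p\mid\gamma\,i\,\konj{\gamma}$, contradicting primitivity, so the icube is primitive. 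The hard part is the existence step, precisely the $\BZ[i]$-valuation lemma that splits off from $\gamma_0$ exactly the content of $\gamma_0\,i\,\konj{\gamma_0}$ as a primary Gaussian factor; once that is available, the rest is routine manipulation with Lemma~\ref{3repr}, its uniqueness clause, and the semigroup and parity properties of primary quaternions.
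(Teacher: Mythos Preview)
Your proof is correct, but two of its three parts take a genuinely different route from the paper.

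\textbf{Existence.} The paper does not pass to $\BZ[i]$. Instead it observes that $\gamma$ and $\gamma i$ are twins and applies Lemma~\ref{twin_param} to them: for each prime $p$ dividing the content of $\gamma\,i\,\konj{\gamma}$ there is a primary $\pi$ of norm~$p$ dividing both $\gamma$ and $\gamma i$ on the right, and the uniqueness clause of Lemma~\ref{quat_lnko} then forces $\pi i=\pm i\pi$; so $\pi$ can be moved across $\vep_1,\vep_2$ and absorbed into $\delta$. Iterating removes the content one prime at a time. Your approach extracts the whole content at once as a Gaussian integer $\xi$ (which commutes with $i$ automatically); this is more explicit and self-contained, but it costs you the valuation lemma, whereas the paper simply reuses machinery already in place.

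\textbf{Uniqueness.} Here both proofs compute $v_2\konj{v_1}$, read off $\Norm(\delta_\ell)$ as the content, and invoke Lemma~\ref{3repr}. You are more careful than the paper in explicitly ruling out the sign $\gamma_1\,i\,\konj{\gamma_1}=-\gamma_2\,i\,\konj{\gamma_2}$ via $-\gamma_2\,i\,\konj{\gamma_2}=(\gamma_2 j)\,i\,\konj{(\gamma_2 j)}$ and a parity contradiction; the paper glosses over this.

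\textbf{Primitivity.} The paper does not use $M_2(\BZ/p\BZ)$ at all: it repeats the ``$C=cC'$'' trick from Theorem~\ref{icube_unique}, writing a non-primitive icube as $c$ times a primitive one, representing the latter, and contradicting the uniqueness just proved. That is shorter; your matrix-rank argument is an independent proof that does not rely on uniqueness, which is a pleasant bonus.
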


\begin{proof}
Theorem~\ref{twin_decomp_odd} yields a decomposition
$C=(\gamma\vep_1\delta,\gamma\vep_2\delta)$. Suppose that
$\gamma\,i\,\konj{\gamma}$ is divisible by a
prime~$p$. Apply Lemma~\ref{twin_param} to $\alpha=\gamma$
and $\beta=\gamma i$. We get that there is a primary $\pi$
with norm~$p$ that divides both $\gamma$ and $\gamma i$ on
the \emph{right.} Let $\gamma=\gamma_1\pi$ (so~$\gamma_1$ is
primary). Now $\gamma i=\gamma_1\pi i$ is right divisible by
$\pi$, so the uniqueness statement of Lemma~\ref{quat_lnko}
shows that $\pi i=\vep \pi$ for some unit~$\vep$. As $\pi$
is primary, Claim~\ref{Sg_elementary2} gives that $\vep\in
Q$, so $\vep\in S_i$ by Claim~\ref{Sg_elementary}, that is,
$\vep=\pm i$. Therefore we can write $C$ as
$\big(\gamma_1\vep_1(\pi\delta),
\gamma_1(\pm\vep_2)(\pi\delta)\big)$. Applying this several
times we get a representation where
$\gamma\,i\,\konj{\gamma}$ is primitive.

Suppose that $C=(\alpha_1,\alpha_2)=
(\gamma_1\vep_1\delta_1,\gamma_1\vep_2\delta_1)=
(\gamma_2\vep_1\delta_2,\gamma_2\vep_2\delta_2)$ are two
representations such that
$\gamma_\ell\,i\,\konj{\gamma_\ell}$ are both
primitive. Then $\alpha_2\konj{\alpha_1}=
\Norm(\delta_\ell)\gamma_\ell(\vep_2\konj{\vep_1})
\konj{\gamma_\ell}$. Here $\vep_2\konj{\vep_1}=\pm i$, and
therefore this quaternion determines $\Norm(\delta_\ell)$ as
the positive gcd of its coefficients. Thus
$\Norm(\delta_1)=\Norm(\delta_2)$ and
$\gamma_1\,i\,\konj{\gamma_1}=
\gamma_2\,i\,\konj{\gamma_2}$. The uniqueness statement of
Lemma~\ref{3repr} shows that $\gamma_1=\gamma_2$, since both
are primary. Thus $\delta_1=\delta_2$ as well.  The
uniqueness statement in the last sentence of the theorem can
be proved exactly as in Theorem~\ref{icube_unique}.
\end{proof}

\begin{cor}\label{icube_orderly_count2}
Suppose that $N$ is an odd integer. Then the number of
orderly, primitive $2$-icubes with edge norm~$N$ is
\[
k_2(N)=4\sum_{d\mid N} q(d)h(N/d)\,,
\]
where $p$ runs over the prime divisors of $d$ and the
functions $q$ and $h$ are given by Claim~\ref{twin_count2}
and Claim~\ref{twin_primary_count}, respectively.
\end{cor}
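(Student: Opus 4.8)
The plan is to imitate the proof of Corollary~\ref{icube_orderly_count}, but feeding in the refined decomposition of Theorem~\ref{icube_unique2} in place of Theorem~\ref{icube_unique}. First I would recall that, by Theorem~\ref{twin_decomp_odd}, every orderly $2$-icube with odd edge norm~$N$ has the form $(\gamma\vep_1\delta,\gamma\vep_2\delta)$ with $\gamma,\delta\in\BL$ primary and $\vep_1=\pm1$, $\vep_2=\pm i$; Theorem~\ref{icube_unique2} then says that among all such representations there is a \emph{unique} one in which $\gamma\,i\,\konj{\gamma}$ is primitive, and that the resulting icube is primitive exactly when $\delta$ is primitive. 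So the primitive orderly $2$-icubes with edge norm~$N$ correspond bijectively to triples $(\gamma,\delta,(\vep_1,\vep_2))$ where $\gamma$ is primary with $\gamma\,i\,\konj{\gamma}$ primitive, $\delta$ is primary and primitive, $\Norm(\gamma)\cdot\Norm(\delta)=N$, and $(\vep_1,\vep_2)\in\{\pm1\}\times\{\pm i\}$.

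Next I would count these triples. Setting $d=\Norm(\gamma)$, the value of $d$ ranges over the divisors of~$N$, and then $\Norm(\delta)=N/d$ is forced. For each such $d$, the number of choices of $\gamma$ is $q(d)$ by Claim~\ref{twin_count2} (the number of primary quaternions of norm~$d$ with $\gamma\,i\,\konj{\gamma}$ primitive), and the number of choices of $\delta$ is $h(N/d)$ by Claim~\ref{twin_primary_count} (the number of primary primitive quaternions of norm $N/d$). There is a mild edge case: the counting functions $q$ and $h$ are stated for arguments $>1$, so I would dispose of the factors equal to~$1$ separately, noting that the only primary quaternion of norm~$1$ is~$1$ itself, which is primitive and satisfies $1\cdot i\cdot\konj 1=i$ primitive, so the value $1$ is the natural extension of both $q$ and $h$ to the argument~$1$. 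Finally, there are $2\cdot2=4$ sign choices for $(\vep_1,\vep_2)$, independent of everything else. Multiplying gives
\[
k_2(N)=4\sum_{d\mid N} q(d)\,h(N/d)\,,
\]
as claimed.

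The only genuine point to check — and the place where this differs from Corollary~\ref{icube_orderly_count} — is that the two quaternions $\gamma$ and $\delta$ are chosen \emph{independently}: the constraint ``$\gamma\,i\,\konj{\gamma}$ primitive'' involves only $\gamma$, and the constraint ``$\delta$ primitive'' involves only~$\delta$, while the uniqueness clause of Theorem~\ref{icube_unique2} guarantees there is no overcounting (distinct triples give distinct icubes). This is immediate from the theorem once it is correctly invoked, so there is no real obstacle; the corollary is essentially a bookkeeping consequence of Theorem~\ref{icube_unique2}, Claim~\ref{twin_count2} and Claim~\ref{twin_primary_count}, and the summation can be left to the reader exactly as in the case $m\ge3$.
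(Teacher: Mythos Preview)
Your proposal is correct and follows essentially the same approach as the paper: invoke the unique representation of Theorem~\ref{icube_unique2}, set $d=\Norm(\gamma)$ so that $\Norm(\delta)=N/d$, count $\gamma$ by $q(d)$ and $\delta$ by $h(N/d)$, and multiply by the four sign choices for $(\vep_1,\vep_2)$. Your added remarks about the boundary case $d=1$ and the independence of the two choices are not in the paper's terse proof, but they are harmless elaborations rather than a different argument.
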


\begin{proof}
Consider the unique representation given by
Theorem~\ref{icube_unique2}, and let
$d=\Norm(\gamma_1)$. Then $\Norm(\delta_1)=N/d$. These two
quaternions can be chosen $q(d)h(N/d)$ ways by the claims
quoted in the corollary, and $d$ can be any divisor
of~$N$. Finally, there are $4$ possibilities to chose the
signs of $\vep_1$ and $\vep_2$.
\end{proof}

\vbox{%
To compute $f_2$ as stated in Theorem~\ref{main_count} we
mimic the argument presented above for~$f_4$. The reduction
to odd norm is the same, and if $D$ is odd, then we get that
\[
f_2(D)=(4\cdot 12)\sum_{c^2\mid N} 
\sum_{d\,\mid\, (N/c^2)}q(d)h\big(N/(c^2d)\big)\,.
\]
Again, the details of the summation are left to the reader.}


\begin{thebibliography}{11}

\bibitem{CS03} J. H. Conway, D. A. Smith, \emph{On
    Quaternions and Octonions: Their Geometry, Arithmetic
    and Symmetry}, A K Peters, 2003.

\bibitem{E43} B. Eckmann, \emph{Gruppentheoretischer Beweis
    des Satzes von Hurwitz-Radon \"uber die Komposition
    quadratishcer Formen}, Comment.\ Math.\ Helvet.\ 15
  (1943), 358-366.

\bibitem{GKMS08} L. M. Goswick, E. W. Kiss, G. Moussong,
  N. Sim\'anyi, \emph{Sums of squares and orthogonal
    integral vectors}, Journal of Number
  Theory, to appear, see {\tt http://arxiv.org/abs/0806.3943}.

\bibitem{HW79} G. H. Hardy, E. M. Wright, \emph{An
  introduction to the theory of numbers, 5th Ed.}, Oxford,
  Clarendon Press, 1979.

\bibitem{H19} A. Hurwitz, \emph{Vorlesungen \"uber die
    Zahlentheorie der Quaternionen}, Berlin, 1919.

\bibitem{H23} A. Hurwitz, \emph{\"Uber die Komposition
  der quadratischen Formen}, Math. Ann. 88 (1923) 1-25.

\bibitem{R22} J. Radon, \emph{Lineare scharen
  orthogonale Matrizen}, Abh. Math. Sem. Univ. Hamburg 1
  (1922), 1-14.

\bibitem{Sar61} A. S\'ark\"ozy, \emph{On lattice-cubes in
  the three-space} (in Hungarian), Matematikai Lapok, 1961.

\end{thebibliography}
\end{document}